\newenvironment{algo}[1]
{\begin{algorithm}[#1]%
    \small%
    \DontPrintSemicolon%
    \SetArgSty{texttsf}%
    \SetTitleSty{textsf}{}%
    \SetNlSty{textrm}{}{}%
    \SetKwInput{Inputs}{input}%
    \SetKwInput{Outputs}{output}%
    \SetKwData{Converged}{converged}%
    \SetKwComment{tcc}{[}{]}
}
{\end{algorithm}}
\newcommand{\pmat}[1]{{\renewcommand{\arraystretch}{1.1}%
   \begin{pmatrix}#1\end{pmatrix}}}  % Need \usepackage{amsmath}
\newcommand{\bmat}[1]{{\renewcommand{\arraystretch}{1.1}%
   \begin{bmatrix}#1\end{bmatrix}}}
   \def\innerprod(#1,#2){\langle#1,#2\rangle} % Must be called as \innerproduct(A,B)
\newcommand{\Newcommand}[2]%
   {\ifx#1\undefined \newcommand{#1}{#2} \else \renewcommand{#1}{#2} \fi}
  \newcommand{\mod}[1]{|#1|}
  \renewcommand{\mod}[1]{|#1|}
\Newcommand{\Re} {\mathbb{R}}           % Need \usepackage{amssymb}
\providecommand{\diag} {\mathop{\mathrm{diag}}}
\providecommand{\dim}  {\mathop{\mathrm{dim}}}
\providecommand{\rank} {\mathop{\mathrm{rank}}}
\providecommand{\CG}        {{\small CG}}
\providecommand{\CGLS}      {{\small CGLS}}
\providecommand{\MINRESQLP} {{\small MINRES-QLP}}
\providecommand{\SQMR}      {{\small SQMR}}
\providecommand{\SOL}       {{\small SOL}}
\providecommand{\LSQR}      {{\small LSQR}}
\newcommand{\SymOrtho}{\text{SymOrtho}}
\providecommand{\diag} {\mathop{\operator@font{diag}}}
\newcommand{\be}{\begin{enumerate}}
\newcommand{\ee}{\end{enumerate}}
\newcommand{\T}{^T\!}
\newcommand{\Cpp}{C\raise3pt\hbox{\tiny++}}
\newcommand{\cond}{\mathrm{cond}}
\newcommand{\Null}{\mathop{\mathrm{null}}}
\newcommand{\range}{\mathop{\mathrm{range}}}
\newcommand{\Span}{\mbox{\rm span}}
\newcommand{\etal}{et al.}  %%% No italics!!  Also, must say \etal\
\newcommand{\inv}{^{-1}}
\newcommand{\normm}[1]{\biggl\|#1\biggr\|}
\newcommand{\norm}[1]{\|#1\|}
\newcommand{\spose}[1]{\hbox to 0pt{#1\hss}}
\providecommand{\text}[1]{\hbox{\quad#1\quad}}
\newcommand{\nthinsp}{\mskip -2   mu}
\Newcommand{\R}{_{\scriptscriptstyle R}}
\newcommand{\superstar}{^{\raise 0.5pt\hbox{$\nthinsp *$}}}
\newcommand{\SUPERSTAR}{^{\raise 0.5pt\hbox{$*$}}}
\newcommand{\lamstarT }{\lambda^{\raise 0.5pt\hbox{$\nthinsp *$}T}}
\newcommand{\rhat}{\skew3\widehat r}
\newcommand{\xhat}{\skew{2.8}\widehat x}
\providecommand{\Matlab}{{\sc Matlab}}
\providecommand{\GMRES }{{\small GMRES}}
\providecommand{\LSQR  }{{\small LSQR}}
\providecommand{\MINRES}{{\small MINRES}}
\providecommand{\SYMMLQ}{{\small SYMMLQ}}
\newtheorem{example}{Example}[section]
\newtheorem{remark}{Remark}[section]
\newcommand{\e}[1]{\text{e}{#1}}
\newcommand{\myhalf}{\frac12}
\newcommand{\mystrut}{\rule[-1.9ex]{0pt}{5.3ex}}
\newcommand{\tablestrut}{\rule[-1ex]{0pt}{3.5ex}}
\newcommand{\underTj}{\underline{T_j}}
\newcommand{\underTk}{\underline{T_k}}
\newcommand{\underTkp}{\underline{T_{k+1}}}
\title{MINRES-QLP: a Krylov subspace method for
       indefinite or singular symmetric systems%
    \thanks{Received by the editors March 7, 2010.
            \hspace*{200pt}\llap{Draft MINRESQLP56 of \today.}
            Revised March 31, 2011.
            \URL sisc/
            }}
\author{Sou-Cheng T. Choi%
    \thanks{Institute for Computational and Mathematical Engineering,
            Stanford University,
            Stanford, CA 94305-4121 (scchoi@stanford.edu).
            This author's research was partially supported by
            National Science Foundation grant CCR-0306662.}
\and
    Christopher C. Paige%
    \thanks{Computer Science, McGill University,
            Montreal, Quebec, Canada, H3A 2A7 (paige@cs.mcgill.ca).
            This author's research was partially supported by
            NSERC of Canada grant OGP0009236.}
\and
    Michael A. Saunders%
    \thanks{Department of Management Science and Engineering,
            Stanford University, Stanford, CA 94305-4026
            (saunders@stanford.edu).
            This author's research was partially supported by
            National Science Foundation grant CCR-0306662,
            Office of Naval Research grants N00014-02-1-0076
            and N00014-08-1-0191, and
            AHPCRC.}}
\begin{document}

\maketitle

\begin{abstract}
  CG, SYMMLQ, and MINRES are Krylov subspace methods for solving
  symmetric systems of linear equations.
  %CG (the conjugate-gradient
  %method) is reliable on positive-definite systems, while SYMMLQ and
  %MINRES are designed for indefinite systems.
  When these methods are applied to an incompatible system (that is, a
  singular symmetric least-squares problem), CG could break down and
  SYMMLQ's solution could explode, while MINRES would give a
  least-squares solution but not necessarily the minimum-length
  (pseudoinverse) solution. This
  understanding motivates us to design a MINRES-like algorithm to
  compute minimum-length solutions to singular symmetric systems.

  MINRES uses QR factors of the tridiagonal matrix from the Lanczos
  process (where $R$ is upper-tridiagonal).  MINRES-QLP
  uses a QLP decomposition (where rotations on the right reduce $R$ to
  lower-tridiagonal form). On ill-conditioned systems (singular or not),
  MINRES-QLP can give more accurate solutions than MINRES.  We derive
  preconditioned MINRES-QLP, new stopping rules, and better estimates
  of the solution and residual norms, the matrix norm, and the
  condition number.
\end{abstract}

\begin{keywords}
  MINRES, Krylov subspace method, Lanczos process,
  conjugate-gradient method, minimum-residual method, %SC: removed: ill-posed problem,
  singular least-squares problem, sparse matrix
\end{keywords}

\begin{AMS}
   15A06, 65F10, 65F20, 65F22, 65F25, 65F35, 65F50, 93E24
\end{AMS}

\begin{DOI}
   xxx/xxxxxxxxx
\end{DOI}

\section{Introduction}  \label{sec:intro}

We are concerned with iterative methods for solving a symmetric linear
system $Ax=b$ or the related least-squares (LS) problem
\begin{equation}  \label{eqn4b}
  \min \norm{x}_2 \quad \text{s.t.} \quad x \in \arg\min_x\norm{Ax-b}_2,
\end{equation}
where $A \in \mathbb{R}^{n \times n}$ is symmetric and possibly singular,
$b \in \mathbb{R}^{n}$, $A \neq 0$, and $b\neq 0$.
Most of the results in our discussion are directly extendable to problems
with complex Hermitian matrices $A$ and complex vectors $b$.

The solution of \eqref{eqn4b}, called the \textit{minimum-length} or
\textit{pseudoinverse} solution \cite{GV}, is formally given by
$x^\dagger = (A\T A)^\dagger A\T b=(A^2)^\dagger Ab = (A^\dagger)^2
Ab$, where $A^\dagger$ denotes the pseudoinverse of $A$. The
pseudoinverse is continuous under perturbations $E$ for which
$\rank{(A+E)}=\rank{(A)}$ \cite{S69}, and $x^\dagger$ is continuous
under the same condition.  Problem \eqref{eqn4b} is then well-posed
\cite{Had1902}.

Let $A=U\Lambda U\T $ be an eigenvalue decomposition of $A$, with $U$
orthogonal and $\Lambda \equiv \diag(\lambda_1,\ldots,\lambda_n)$.  We
define the condition number of $A$ to be $\kappa(A) =
\smash[b]{\frac{\max |\lambda_i|}{\min_{\lambda_i \neq 0} |\lambda_i|}}$,
and we say that $A$ is ill-conditioned if $\kappa(A)\gg 1$.
Hence a singular matrix could be well-conditioned or ill-conditioned.

\SYMMLQ{} and \MINRES{} \cite{PS75} are Krylov subspace methods for
solving symmetric indefinite systems $Ax = b$.
\SYMMLQ{} is reliable on compatible systems
even if $A$ is ill-conditioned or singular,
while on (singular) incompatible problems its iterates $x_k$
diverge to a multiple of a nullvector of $A$
\cite[Proposition 2.15]{C06} and \cite[Lemma 2.17]{C06}.
\MINRES{} seems more desirable to users because its
residual norms are monotonically decreasing.
%It is reliable on
%compatible systems as long as $A$ is not too ill-conditioned.
On singular compatible systems, \MINRES{} returns $x^\dagger$
(see Theorem \ref{theorem-singular-compatible}).
On singular incompatible systems, \MINRES{} is reliable if terminated with a
suitable stopping rule involving $\norm{Ar_k}$
(see Lemma \ref{minreslemma2}),
%SC: moved up the phrase about stopping rule
%\MINRES{} should also be reliable on singular incompatible systems
%if $A$ is not too ill-conditioned.
but the solution will not be $x^\dagger$.

Here we develop a new solver of this type named \MINRESQLP{}
\cite{C06}.  The aim is to deal reliably with compatible or
incompatible systems and to return the \emph{unique} solution of
\eqref{eqn4b}.  We give theoretical reasons why \MINRESQLP{} improves
the accuracy of \MINRES{} on ill-conditioned systems, and illustrate
with numerical examples.

Incompatible symmetric systems could arise from
discretized semidefinite Neumann boundary value problems \cite[section~4]{K88},
and from any other singular systems involving measurement errors in $b$.
Another potential application is large symmetric indefinite low-rank
Toeplitz LS problems as described in \cite[section~4.1]{GTVV96}.

\subsection{Notation}  \label{sec:notation}

The letters $i$, $j$, $k$ denote integer indices, $c$ and $s$
cosine and sine of some angle $\theta$, $e_k$
the $k$th unit vector, $e$
a vector of all ones, and other lower-case letters such as $b$, $u$,
and $x$ (possibly with integer subscripts) denote \textit{column}
vectors.
Upper-case letters $A$, $T_k$, $V_k$, \dots{} denote matrices, and
$I_k$ is the identity matrix of order $k$.  Lower-case Greek letters
denote scalars; in particular, $\varepsilon \approx 10^{-16}$ denotes
the floating-point precision.
If a quantity $\delta_k$ is modified one or more times, we denote its
values by $\delta_k$, $\delta_k^{(2)}$, $\delta_k^{(3)}$, \dots.
The symbol $\norm{\,\cdot\,}$ denotes the $2$-norm of a
vector or matrix.
%We use $\diag(v)$ to denote a diagonal matrix with
%elements of a vector $v$ on the diagonal,
%$\mathcal{R}(\cdot)$ the range and
%$\mathcal{N}(\cdot)$ the nullspace of a matrix,
%while $\mathcal{K}_k(A,b)$ is the $k$th Krylov subspace of $A$ and $b$.
%SC: made reference to (1.1)
For an incompatible system, $Ax\approx b$ is shorthand
for the LS problem \hbox{$\min_x\norm{Ax-b}$}.

%SC: defined positive definite matrix
%A symmetric matrix $A$ is positive definite if all its eigenvalues are
%positive and we write $A \succ 0$.

\subsection{Overview}

In sections \ref{sec:Lanczos}--\ref{sec:QLPreview}
we briefly review the Lanczos process, \MINRES{}, and QLP decomposition
before introducing \MINRESQLP{} in section \ref{sec:MINRESQLP}.
We derive norm estimates in section \ref{sec:QLPstop} and
preconditioned \MINRESQLP{} in section \ref{sec:pminres}.
Numerical experiments are described in section \ref{sec:numerical}.

\section{The Lanczos process}  \label{sec:Lanczos}

%SC: removed "reliable" and added "in terms of numerical stability" to Chris's form
Given $A$ and $b$, the Lanczos process \cite{L50}
computes vectors $v_k$ and tridiagonal matrices $\underline{T_k}$
according to $v_0 \equiv 0$, $\beta_1 v_1=b$, and then%
\footnote{Numerically, $p_k = Av_k-\beta_kv_{k-1}$, $\alpha_k = v_k\T p_k$,
$\beta_{k+1}v_{k+1} = p_k-\alpha_kv_k$ is slightly better \cite{P76}.}
\begin{equation*}
    p_k = Av_k, \qquad \alpha_k = v_k\T p_k,
                \qquad \beta_{k+1}v_{k+1} = p_k-\alpha_kv_k-\beta_kv_{k-1}
\end{equation*}
for $k=1,2,\dots,\ell$, where we choose $\beta_k > 0$ to give
$\norm{v_k}=1$.
In matrix form,
\begin{equation}  \label{eq:avk}
  AV_k \!=\! V_{k+1}\underTk,\quad
  \underTk \!\equiv\! \mbox{\footnotesize
    $\bmat{\alpha_{1} & \beta_{2}
        \\ \beta_{2}  & \alpha_{2} & \ddots
        \\            & \ddots & \ddots & \beta_k
        \\            &  & \beta_k & \alpha_k
        \\            &  &  & \beta_{k+1}}
    $}
    \!\equiv\!
    \bmat{T_k \\ \beta_{k+1}e_k^T},
    \quad V_k \!\equiv\! \bmat{v_1 & \!\cdots\! & v_k}.
\end{equation}
In exact arithmetic, the columns of $V_k$ are orthonormal and the
process stops with $k = \ell$ and $\beta_{\ell+1}=0$ for some
$\ell \le n$, and then $AV_\ell = V_\ell T_\ell$.
For derivation purposes we assume that this happens,
%that $V_k^T V_k\drop = I$ for all $k$.  We also allow $\beta_{k+1}=0$
%for completeness, though that case may be skipped on a first reading.)
though in practice it is unlikely unless $V_k$ is reorthogonalized for each $k$.
%In practice it is likely that $\beta_k > 0$ for all $k$, but
In any case, \eqref{eq:avk} holds to machine precision and the computed vectors
satisfy $\norm{V_k}_1 \approx 1$ (even if $k \gg n$).

%SC: \pagebreak

%\begin{assumption}[Orthogonality of $V_k$]  \label{assume:orthog}
%    For example, in
%  deriving estimates of certain norms, if $q_k = \alpha v_{k-1} +
%  \beta v_k$ for some scalars $\alpha$ and $\beta$, we would assume
%  that $\norm{q_k} = \norm{[\alpha \ \; \beta]}$.  Even though their
%  relative accuracies may diminish, the resulting norm estimates help
%  determine a safe point of termination.
%\end{assumption}

\subsection{Properties of the Lanczos process} \label{sec:Lanproperties}

The $k$th Krylov subspace generated by $A$ and $b$ is defined to be
$\mathcal{K}_k(A,b) = \Span \{b, Ab, A^2b, \dots, A^{k-1}b\} =
\Span(V_k)$.  The following properties should be kept in mind:

\begin{enumerate}

\item If $A$ is changed to $A-\sigma I$ for some scalar shift
  $\sigma$, $T_k$ becomes $T_k - \sigma I$ and $V_k$ is unaltered,
  showing that singular systems are commonplace.  Shifted problems
  appear in inverse iteration or Rayleigh quotient iteration.

%\item If $A$ is positive definite, so is $T_k$ for all $k$.

\item $\underTk$ has full column rank $k$ for all $k < \ell$.

\item If $A$ is indefinite, some $T_k$ might be singular for $k < \ell$, but then by the
  Sturm sequence property (see \cite{GV}), $T_k$ has exactly one zero
  eigenvalue and the strict interlacing property implies that $T_{k
    \pm 1}$ are nonsingular.  Hence $T_k$ cannot be singular twice in
  a row (whether $A$ is singular or not).

%\item If $A$ is nonsingular, $AV_\ell = V_\ell T_\ell$ shows that
%  $T_\ell$ is nonsingular.  If $A$ is singular, the rank of $T_\ell$
%  depends on $b$.

\item $T_\ell$ is nonsingular if and only if $b \in \range(A)$.
  (See appendix \ref{appendixA}.)
\end{enumerate}

\section{\MINRES}  \label{sec:MINRESreview}

Algorithm \MINRES{} \cite{PS75} is a natural way of using the Lanczos process
%\eqref{eq:avk}
to solve $Ax = b$ or $\min_x \norm{Ax-b}$.  For $k < \ell$, if $x_k =
V_ky_k$ for some vector $y_k$, the associated residual is
\begin{equation}  \label{eqn:rk}
   r_k \equiv b-Ax_k = b - AV_k y_k
       = \beta_1 v_1 - V_{k+1} \underTk y_k
       = V_{k+1} (\beta_1 e_1 - \underTk y_k).
\end{equation}
To make $r_k$ small, it is clear that $\beta_1 e_1 - \underTk y_k$
should be small.  At this iteration $k$, \MINRES{} minimizes the residual
subject to $x_k\in\mathcal{K}_k(A,b)$ by choosing
\begin{equation}  \label{eqn:LSsubprob}
  y_k = \arg\min_{y \in \mathbb{R}^k}
        \norm{\underTk y - \beta_1 e_1}.
\end{equation}
This subproblem is processed by the expanding QR factorization:
$Q_0 \equiv 1$ and
\begin{equation}  \label{QRfac}
  Q_{k,k+1}\!\equiv\!
  \left[\begin{smallmatrix}
           I_{k-1}&&
        \\        & c_k &   \!\!\phantom-s_k
        \\        & s_k &   \!-c_k
        \end{smallmatrix}\right]\!\!,
                 \quad
  Q_k \!\equiv\! Q_{k,k+1}
                 \bmat{Q_{k-1} \\ & \!1}\!,
                 \quad
                 Q_k \bmat{\underTk & \beta_1 e_1}
      \!=\!      \bmat{ R_k & t_k \\ 0 & \phi_k}\!,
\end{equation}
where $c_k$ and $s_k$ form the Householder reflector $Q_{k,k+1}$ that
annihilates $\beta_{k+1}$ in $\underTk$ to give
upper-tridiagonal $R_k$, with $R_k$ and $t_k$ being unaltered in later
iterations.

% When $\beta_{k+1}>0$, $R_k$ is nonsingular
When $k < \ell$, % $\underTk$ and $R_k$ have full rank and
the unique solution of
\eqref{eqn:LSsubprob} satisfies $R_k y_k = t_k$.  Instead of solving
for $y_k$, \MINRES{} solves $R_k\T D_k\T = V_k\T$ by forward
substitution, obtaining the last column $d_k$ of $D_k$ at iteration
$k$.  At the same time, it updates $x_k$ via
$x_0 \equiv 0$ and
\begin{equation}  \label{minresxk}
    x_k = V_k y_k = D_k R_k y_k = D_k t_k
  = x_{k-1} + \tau_k d_k,\quad \tau_k\equiv e_k^Tt_k.
\end{equation}

When $k=\ell$, we can form $T_\ell$ but nothing else expands.
In place of \eqref{eqn:rk} and \eqref{QRfac} we have
\(
   r_\ell = V_\ell (\beta_1 e_1 - T_\ell y_\ell)
\)
and
\(
   Q_{\ell-1} \bmat{T_\ell & \beta_1 e_1} = \bmat{R_\ell & t_\ell}
\)
and it is natural to choose $y_\ell$ from the subproblem
\begin{equation}  \label{eqn:LSsubprob-ell}
   \min \norm{T_\ell y_\ell - \beta_1 e_1}
   \quad \equiv \quad
   \min \norm{R_\ell y_\ell - t_\ell}.
\end{equation}
There are two cases to consider:
\begin{enumerate}
\item If $T_\ell$ is nonsingular, $R_\ell y_\ell = t_\ell$ has a
  unique solution.  Since $AV_\ell y_\ell = V_\ell T_\ell y_\ell =
  b$, the problem is solved by $x_\ell = V_\ell y_\ell $ with residual
  $r_\ell=0$ (the system is compatible, even if $A$ is singular).
  Theorem \ref{theorem-singular-compatible} proves that $x_\ell =
  x^\dagger$.

\item If $T_\ell$ is singular, $A$ and $R_\ell$ are singular
  ($R_{\ell\ell}=0$) and both $Ax = b$ and $R_\ell y_\ell = t_\ell$ are
  incompatible.  This case was not handled by \MINRES{} in \cite{PS75}.
  Theorem \ref{theorem-singular-incompatible} proves that the \MINRES{}
  point $x_{\ell-1}$ is a least-squares solution (but not necessarily
  $x^\dagger$).  Theorem \ref{theorem-MINRES-QLP} proves that the
  \MINRESQLP{} point $x_\ell = V_\ell y_\ell^\dagger = x^\dagger$, where
  $y_\ell^\dagger$ is the min-length solution of
  \eqref{eqn:LSsubprob-ell}.
\end{enumerate}

\subsection{Further details of \MINRES}   \label{sec:MINRESdetails}

To describe \MINRESQLP{} thoroughly, we need further
details of the \MINRES{} QR factorization \eqref{QRfac}.
For $1 \le k < \ell$,
\begin{equation}  \label{QRfac2}
  \bmat{\,R_k\, \\ 0} = \mbox{\small
$\bmat{
      \gamma_1 & \delta_2 & \epsilon_3
\\                   & \gamma_2^{(2)} & \delta_3^{(2)} & \ddots
\\                   &                & \ddots         & \ddots & \epsilon_k
\\                   &                &                & \ddots & \delta_k^{(2)}
\\                   &                &                &        & \gamma_k^{(2)}
\\                   &                &                &        & 0}$},
  \quad
  \bmat{t_k \\ \phi_k} \equiv
  \bmat{\tau_1\\ \tau_2\\ \vdots\\ \vdots\\ \tau_k\\ \phi_k}
  = \beta_1
  \bmat{c_1 \\ s_1c_2 \\ \vdots \\ \vdots
     \\ s_1 \cdots s_{k-1}c_k
     \\ s_1 \cdots s_{k-1}s_k}
\end{equation}
(where the superscripts are defined in section~\ref{sec:notation}).
With $\phi_0 \equiv \beta_1>0$,
the full action of $Q_{k,k+1}$ in \eqref{QRfac}, including its effect
on later columns of $T_j$, $k < j \le \ell$, is described~by
\begin{equation}  \label{min7}
  \bmat{   c_k & \!\!\!\phantom-s_k
        \\ s_k & \!\!-c_k}
  \bmat{\begin{matrix}
           \gamma_k & \delta_{k+1} & 0
        \\ \beta_{k+1}    & \alpha_{k+1}       & \beta_{k+2}
        \end{matrix}
        & \biggm| &
        \begin{matrix} \phi_{k-1} \\ 0 \end{matrix}
       }
=
  \bmat{\begin{matrix}
           \gamma_k^{(2)} & \delta_{k+1}^{(2)} & \epsilon_{k+2}
        \\ 0              & \gamma_{k+1} & \delta_{k+2}
        \end{matrix}
        & \biggm| &
        \begin{matrix} \tau_k \\ \phi_k \end{matrix}
       },
\end{equation}
where $s_k = \beta_{k+1}/\norm{[\gamma_k \ \; \beta_{k+1}]} > 0$,
giving $\gamma_1$, $\gamma_k^{(2)} > 0$ with $R_j$ nonsingular
for each $j \le k < \ell$.
Thus the $d_j$ in \eqref{minresxk} can be found from
\begin{align}
   R_k\T D_k\T = V_k\T\ :\ &
   \left\{
     \begin{array}{l}
       d_1 = v_1/\gamma_1,\quad
       d_2 = (v_2-\delta_2d_1)/\gamma_2^{(2)}, \\
       d_j = ({v_j - \delta_j^{(2)} d_{j-1}
              - \epsilon_j d_{j-2}}) / {\gamma_j^{(2)}},
       \quad j=3,\ldots,k.
     \end{array}
   \right.
   \label{eq:rdeqv}
\end{align}
Also, $\tau_k = \phi_{k-1} c_k$ and $\phi_k = \phi_{k-1} s_k > 0$.
Hence from \eqref{eqn:rk}--\eqref{QRfac},
\begin{align}
   \label{eq:normrk}
   \norm{r_k} = \norm{\underTk y_k - \beta_1 e_1} = \phi_k
   \quad\Rightarrow\quad \norm{r_k} = \norm{r_{k-1}} s_k,
\end{align}
which is nonincreasing and tending to zero if $Ax = b$ is
compatible.

\begin{comment}
\begin{remark}
\label{rem:stop1}
If $\gamma_k = 0$ then $T_k$ is singular.  If $k < \ell$ %$\beta_{k+1} > 0$
we have $s_k=1$ and $\norm{r_k} = \norm{r_{k-1}}$ (not a strict
decrease), but this cannot happen twice in a row.
%If $\gamma_k = 0$ and $\beta_{k+1}=0$, then $Q_{k-1,k}$
%terminates the decomposition, the equivalent of \eqref{QRfac2} has
%$\smash{\gamma_k^{(2)}} = 0$, and the final residual stays as $r_{k-1}$ with
%$\norm{r_{k-1}} = \phi_{k-1} = \beta_1 s_1 \cdots s_{k-1} \ne 0$, see
%Remark~\ref{rem:ns}, but $y_k$ is not unique in \eqref{eqn:LSsubprob}.
If $k=\ell-1$ and $\gamma_{k+1} = \gamma_\ell = 0$ in \eqref{min7},
then the final point and residual stay as $x_{\ell-1}$ and $r_{\ell-1}$ with
$\norm{r_{\ell-1}} = \phi_{\ell-1} = \beta_1 s_1 \cdots s_{\ell-1} > 0$.
%, see Remark~\ref{rem:ns},
%but $y_\ell$ is not unique in \eqref{eqn:LSsubprob}.
\end{remark}
\end{comment}

\begin{remark} \label{rem:stop1}
  If $k < \ell $ and $T_k$ is singular, we have $\gamma_k = 0$, $s_k =
  1$, and $\norm{r_k} = \norm{r_{k-1}}$ (not a strict decrease), but
  this cannot happen twice in a row (cf.~section~\ref{sec:Lanproperties}).
\end{remark}

\begin{remark} \label{rem:stop2}
  If $T_\ell$ is singular, \MINRES{} sets the last element of
  $y_\ell$ to be zero.  The final point and residual stay as
  $x_{\ell-1}$ and $r_{\ell-1}$ with $\norm{r_{\ell-1}} =
  \phi_{\ell-1} = \beta_1 s_1 \cdots s_{\ell-1} > 0$.
\end{remark}

\subsection{Compatible systems}

The following theorem assures us that \MINRES{} is a useful solver for
compatible linear systems even if $A$ is singular.

\begin{theorem}[{\cite[Theorem 2.25]{C06}}]  \label{theorem-singular-compatible}
  If $b \in \range(A)$, the final \MINRES{} point $x_\ell$ is the
  minimum-length solution of $Ax=b$ (and $r_\ell = b - Ax_\ell = 0$).
\end{theorem}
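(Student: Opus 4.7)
The plan is to first verify that \MINRES{} actually terminates with a true solution of $Ax=b$ (i.e.\ $r_\ell = 0$) when $b\in\range(A)$, and then to show that this particular solution happens to lie in $\range(A)$, which for symmetric $A$ equals $\range(A\T)$, and is therefore the pseudoinverse solution.

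First I would invoke Property~4 of the Lanczos process stated in section~\ref{sec:Lanproperties}: since $b\in\range(A)$, the final tridiagonal $T_\ell$ is nonsingular. Hence $R_\ell$ is nonsingular, the subproblem \eqref{eqn:LSsubprob-ell} has a unique solution $y_\ell$ defined by $R_\ell y_\ell = t_\ell$, and because $Q_{\ell-1}$ is orthogonal this is equivalent to $T_\ell y_\ell = \beta_1 e_1$. Using $AV_\ell = V_\ell T_\ell$ (the exact-arithmetic termination identity) and $b = \beta_1 v_1 = V_\ell(\beta_1 e_1)$, I get
\begin{equation*}
  A x_\ell \;=\; A V_\ell y_\ell \;=\; V_\ell T_\ell y_\ell \;=\; \beta_1 v_1 \;=\; b,
\end{equation*}
so $r_\ell = 0$.

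Next I would establish minimum-length by a range argument. Since $b\in\range(A)$ and $\mathcal{K}_\ell(A,b) = \Span\{b,Ab,\ldots,A^{\ell-1}b\}$, every basis vector of this Krylov subspace lies in $\range(A)$, hence $x_\ell = V_\ell y_\ell \in \mathcal{K}_\ell(A,b) \subseteq \range(A)$. Because $A$ is symmetric, $\range(A) = \range(A\T) = \Null(A)^\perp$. Any two solutions of $Ax = b$ differ by an element of $\Null(A)$, and by the orthogonal decomposition $\Re^n = \range(A\T) \oplus \Null(A)$ the unique solution lying in $\range(A\T)$ has strictly smaller $2$-norm than any other solution; this is precisely $x^\dagger$. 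Therefore $x_\ell = x^\dagger$.

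I do not expect any real obstacle: the whole argument leans on Property~4 (already asserted and pointed to Appendix~\ref{appendixA}) together with the elementary fact that the pseudoinverse solution of a consistent system is characterized as the unique solution in $\range(A\T)$. The only point requiring a moment's care is justifying the equivalence $R_\ell y_\ell = t_\ell \Leftrightarrow T_\ell y_\ell = \beta_1 e_1$ from the bracketed QR relation $Q_{\ell-1}[\,T_\ell \ \beta_1 e_1\,] = [\,R_\ell \ t_\ell\,]$, which is immediate from orthogonality of $Q_{\ell-1}$.
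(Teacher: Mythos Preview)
Your proof is correct and follows essentially the same approach as the paper: show $T_\ell$ is nonsingular, verify $Ax_\ell=b$, establish $x_\ell\in\range(A)$, and conclude minimum length via orthogonality to $\Null(A)$. The only cosmetic difference is that the paper exhibits an explicit $q$ with $x_\ell=Aq$ (namely $q=V_\ell T_\ell^{-1}y_\ell$, using $AV_\ell=V_\ell T_\ell$) and then writes out the Pythagorean inequality directly, whereas you obtain $x_\ell\in\range(A)$ from the Krylov inclusion $\mathcal{K}_\ell(A,b)\subseteq\range(A)$ and invoke the standard pseudoinverse characterization.
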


\begin{proof}
%This follows from Remark~\ref{rem:ns}.
If $b \in \range(A)$, the Lanczos process gives $AV_\ell = V_\ell T_\ell$
 with nonsingular $T_\ell$, and \MINRES{} terminates with $Ax_\ell=b$ and
 $x_\ell = V_\ell y_\ell = Aq$, where $q = V_\ell T_\ell\inv y_\ell$.
 If some other point $\xhat$ satisfies $A\xhat=b$, let $p = \xhat - x_\ell$.
 We have $Ap=0$ and $x_\ell^T p = q\T Ap = 0$.
 Hence $\norm{\xhat}^2 = \norm{x_\ell + p}^2 = \norm{x_\ell}^2 + 2
 x_\ell^T p + \norm{p}^2 \ge \norm{x_\ell}^2$.
\end{proof}

\subsection{Incompatible systems}

For a singular LS problem $Ax \approx b$, the optimal residual vector
$\rhat$ is unique, but infinitely many solutions $x$ give that residual.
In the following example, \MINRES{} finds a least-squares solution
(with optimal residual) but not the minimum-length solution.

\begin{example} \label{minresCounterEg}
  Let $A = \diag(1,1,0)$ and $b=e$.
  The minimum-length solution to $Ax \approx b$  is
  $x^\dagger = [ 1 \ 1 \ 0 ]\T$ with residual
  $\rhat = b - Ax^\dagger = e_3$ and $A\rhat = 0$.
  \MINRES{} returns the solution $x^\sharp = e$
  (with residual $r^\sharp = b - Ax^\sharp= e_3 = \rhat$ and $Ar^\sharp = 0$).
\end{example}

\begin{theorem}[{\cite[Theorem 2.27]{C06}}]  \label{theorem-singular-incompatible}
  If $b \notin \range(A)$, then
  $\norm{Ar_{\ell-1}} = 0$ and the \MINRES{} $x_{\ell-1}$ is an
  LS solution (but not necessarily $x^\dagger$).
\end{theorem}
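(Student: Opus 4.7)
The plan is to leverage two facts from section~\ref{sec:Lanproperties}: by item~4 there, $b \notin \range(A)$ forces $T_\ell$ to be singular; combined with Remark~\ref{rem:stop2} the final \MINRES{} iterate is then $x_{\ell-1}$ with residual $r_{\ell-1}$. Since $A$ is symmetric, $Ar_{\ell-1}=0$ is equivalent to the normal equations $A\T(Ax_{\ell-1}-b)=0$, which is precisely the LS optimality condition. The whole proof therefore reduces to verifying $Ar_{\ell-1} = 0$.

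Next I would express $r_{\ell-1}$ in the Lanczos basis. From \eqref{eqn:rk}, $r_{\ell-1} = V_\ell w$ with $w \equiv \beta_1 e_1 - \underline{T_{\ell-1}}\, y_{\ell-1} \in \mathbb{R}^\ell$. The normal equations for the \MINRES{} subproblem \eqref{eqn:LSsubprob} at iteration $\ell-1$ give $\underline{T_{\ell-1}}^T w = 0$. At termination $\beta_{\ell+1}=0$, so \eqref{eq:avk} reduces to $AV_\ell = V_\ell T_\ell$ and thus $Ar_{\ell-1} = V_\ell T_\ell w$. Because $V_\ell$ has orthonormal columns, the task collapses to proving $T_\ell w = 0$.

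The crux is the identity $\Null(T_\ell) = \Null(\underline{T_{\ell-1}}^T)$. By symmetry of $T_\ell$, the matrix $\underline{T_{\ell-1}}^T$ is literally the first $\ell-1$ rows of $T_\ell$, which immediately yields the inclusion $\Null(T_\ell) \subseteq \Null(\underline{T_{\ell-1}}^T)$. To upgrade it to equality I would count dimensions: the right-hand side has dimension $1$ since $\underline{T_{\ell-1}}$ has full column rank $\ell-1$ (item~2 of section~\ref{sec:Lanproperties}); the left-hand side has dimension $1$ because $T_\ell$ is singular and unreduced tridiagonal (all $\beta_j > 0$ for $2 \le j \le \ell$), so by the Sturm sequence argument of item~3 its zero eigenvalue is simple. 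With both spaces one-dimensional, the inclusion is an equality, giving $w \in \Null(T_\ell)$ and hence $T_\ell w = 0$. This delivers $Ar_{\ell-1}=0$ and proves $x_{\ell-1}$ is an LS solution; the ``not necessarily $x^\dagger$'' clause is already witnessed by Example~\ref{minresCounterEg}.

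The main subtle step is the simplicity of the zero eigenvalue of $T_\ell$, where the unreduced tridiagonal structure is essential and allows the dimension count to close the argument. Everything else is bookkeeping: identifying $\underline{T_{\ell-1}}^T$ as the top block of the symmetric $T_\ell$ and invoking the LS optimality of $y_{\ell-1}$.
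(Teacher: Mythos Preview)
Your proof is correct but takes a different route from the paper. The paper invokes Lemma~\ref{minreslemma2}, which gives the explicit QR-based formula $Ar_{\ell-1} = \norm{r_{\ell-1}}\gamma_\ell v_\ell$ (the case $k=\ell-1$ with $\delta_{\ell+1}=0$); since $b\notin\range(A)$ forces $T_\ell$ singular and hence $R_{\ell\ell}=\gamma_\ell=0$, the result drops out in one line. Your argument bypasses the QR factorization entirely: you work with the normal equations of the subproblem and the structural identity that $\underline{T_{\ell-1}}^{\,T}$ coincides with the first $\ell-1$ rows of the symmetric $T_\ell$, then close with a nullspace dimension count. What this buys you is a self-contained proof that does not depend on the $Q_k$, $R_k$ machinery or on Lemma~\ref{minreslemma2}; what the paper's approach buys is brevity, since all the work was already absorbed into that lemma. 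One minor remark: your appeal to item~3 of section~\ref{sec:Lanproperties} for the simplicity of the zero eigenvalue of $T_\ell$ is not quite the right citation (that item concerns $k<\ell$), and in fact you do not need it at all---once you have $\Null(T_\ell)\subseteq\Null(\underline{T_{\ell-1}}^{\,T})$ with the right-hand side one-dimensional and $T_\ell$ singular, the equality is immediate.
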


\begin{proof}
Since $b \notin \range(A)$, $T_\ell$ is singular and $R_{\ell\ell} = \gamma_\ell = 0$.
By Lemma~\ref{minreslemma2} below, $A(Ax_{\ell-1}-b) = -Ar_{\ell-1} = -\norm{r_{\ell-1}} \gamma_\ell v_\ell = 0$.
Thus $x_{\ell-1}$ is an LS solution.
\end{proof}

\subsection{Norm estimates in \MINRES}  \label{sec:MINRES-norms}

For incompatible systems, $r_k$ \eqref{eqn:rk} will never be
zero.  However, all LS solutions satisfy $A^2x = Ab$, so that $Ar =
0$.  We therefore need a new stopping condition based on the size of
$\norm{Ar_k}$.  In applications requiring nullvectors, $\norm{Ax_k}$
is also useful.  We present efficient recurrence relations for
$\norm{Ar_k}$ and $\norm{Ax_k}$ in the following Lemma,
which was not considered in the framework of \MINRES{} when
it was originally designed for nonsingular systems \cite{PS75}.

\begin{lemma}[$Ar_k$, $\norm{A r_k}$, $\norm{Ax_k}$ for \MINRES{}] \label{minreslemma2}
If $k < \ell$,
\begin{align*}
   A r_k &= \norm{r_k} \left( \gamma_{k+1} v_{k+1} +
            \delta_{k+2} v_{k+2}\right)
   \quad
   \qquad(\mbox{where $\delta_{k+2} v_{k+2}=0$ if $k = \ell-1$}), % $\beta_{k+2}=0$}),
\\
   \psi_k^2 &\equiv \norm{Ar_k}^2 = \norm{r_k}^2
   \left( [\gamma_{k+1}]^2 + [\delta_{k+2}]^2 \right)
   \qquad(\mbox{where $\delta_{k+2}=0$        if $k = \ell-1$}), % $\beta_{k+2}=0$}),
\\
   \omega_k^2 &\equiv \norm{Ax_k}^2 = \omega_{k-1}^2 + \tau_k^2,
   \quad \omega_0\equiv 0.
\end{align*}
\end{lemma}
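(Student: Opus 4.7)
The plan is to express $r_k$ in closed form via the QR factorization \eqref{QRfac}, multiply by $A$ using the Lanczos relation \eqref{eq:avk}, and then show that the resulting coefficient vector in the basis $V_{k+2}$ has only two nonzero entries, whose values come from \eqref{min7}.

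First I would extract the identity
\[
   r_k \;=\; \phi_k V_{k+1} Q_k\T e_{k+1},\qquad \norm{r_k}=\phi_k,
\]
which follows from \eqref{eqn:rk} together with $Q_k(\beta_1 e_1-\underTk y_k)=\phi_k e_{k+1}$ (a direct consequence of \eqref{QRfac} once $y_k=R_k\inv t_k$ is substituted). Multiplying by $A$ and using \eqref{eq:avk} at index $k+1$ gives $Ar_k=\phi_k V_{k+2}\,\underTkp Q_k\T e_{k+1}$, so the goal reduces to showing $\underTkp Q_k\T e_{k+1}=\gamma_{k+1}e_{k+1}+\delta_{k+2}e_{k+2}$.

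The first $k$ components of this vector vanish by the normal equations for \eqref{eqn:LSsubprob}: $\underTk\T(\beta_1 e_1-\underTk y_k)=0$ combined with the Lanczos identity $V_k\T A V_{k+1}=\underTk\T$ gives $V_k\T A r_k = 0$. For the remaining two components I would read off row $k+2$ of $\underTkp$ (just $\beta_{k+2} e_{k+1}\T$) and row $k+1$ of $\underTkp$ (namely $\beta_{k+1} e_k\T+\alpha_{k+1} e_{k+1}\T$), then compute their inner products with $Q_k\T e_{k+1}$. This needs the last row of $Q_k$, which an easy induction on the recursion $Q_k=Q_{k,k+1}\bmat{Q_{k-1}\\&1}$ in \eqref{QRfac} shows ends in $[\ldots,-c_{k-1}s_k,\,-c_k]$. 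Substituting and using $\delta_{k+1}=-c_{k-1}\beta_{k+1}$, $\delta_{k+2}=-c_k\beta_{k+2}$, and $\gamma_{k+1}=s_k\delta_{k+1}-c_k\alpha_{k+1}$ from \eqref{min7} gives the two entries as $\gamma_{k+1}$ and $\delta_{k+2}$. The $\psi_k^2$ identity is then immediate by orthonormality of $v_{k+1},v_{k+2}$; when $k=\ell-1$ the term in $v_{k+2}$ drops because $\beta_{\ell+1}=0$ forces $\delta_{k+2}=0$.

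For the $\omega_k$ recurrence I would use \eqref{minresxk} in the form $x_k=D_k t_k$ and observe that $AD_k$ has orthonormal columns. Since $D_k R_k=V_k$ and $\underTk=Q_k\T\bmat{R_k\\0}$,
\[
   AD_k \;=\; AV_k R_k\inv \;=\; V_{k+1}\underTk R_k\inv \;=\; V_{k+1}Q_k\T \bmat{I_k\\0},
\]
an isometric image of the first $k$ columns of the orthogonal matrix $Q_k\T$. Hence $\norm{Ax_k}^2=\norm{AD_k t_k}^2=\norm{t_k}^2=\sum_{j=1}^{k}\tau_j^2$, which is the recurrence $\omega_k^2=\omega_{k-1}^2+\tau_k^2$ with $\omega_0=0$.

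The main bookkeeping hurdle is tracking the signs of $c_k$ and $s_k$ in the last row of $Q_k$; once these are pinned down by \eqref{min7} the rest is routine substitution.
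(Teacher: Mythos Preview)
Your proposal is correct and follows the same overall strategy as the paper: write $r_k=\phi_k V_{k+1}Q_k\T e_{k+1}$, multiply by $A$ via \eqref{eq:avk}, and identify the coefficient vector $\underTkp Q_k\T e_{k+1}$; for $\omega_k$, both you and the paper reduce $\norm{Ax_k}$ to $\norm{t_k}$ via $V_{k+1}Q_k\T$.

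The only real difference is in how that coefficient vector is read off. You argue the first $k$ entries vanish by the normal equations and then compute the last two by tracking the last row of $Q_k$ explicitly through the recursion. The paper instead transposes to $e_{k+1}\T Q_k\underTkp\T$ and observes that this is the $(k{+}1)$st row of a matrix already produced by the QR process: the first $k$ entries are the bottom row of $Q_k\underTk=\bmat{R_k\\0}$, hence zero, and the remaining two are precisely the $\gamma_{k+1},\delta_{k+2}$ sitting in \eqref{min7}. This sidesteps both the normal-equations detour and the induction on the last row of $Q_k$, so it is a bit cleaner, but your route gets the same answer with only slightly more bookkeeping.
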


%\vspace{-2\medskipamount}

\begin{proof}
For $k < \ell$, $R_k$ is nonsingular.
From \eqref{eqn:rk}--\eqref{minresxk} with $R_ky_k=t_k$ we have
\begin{align}
   r_k %&= b -A x_k = V_{k+1} ( \beta_1 e_1 - \underTk y_k )     \nonumber
       &= V_{k+1} Q_k^T \pmat{ \bmat{t_k \\ \phi_k} - \bmat{R_k \\ 0}  y_k}
          = \phi_k V_{k+1} Q_k^T e_{k+1},                        \label{rk7}
\\Ar_k &= \phi_k V_{k+2} \underTkp Q_k\T e_{k+1},                \nonumber
\\ Q_k \underTkp\T
       &= Q_k \bmat{T_{k+1} & \beta_{k+2} e_{k+1}}
        = Q_k \bmat{     T_k           & \beta_{k+1} e_k  & 0
                 \\ \beta_{k+1} e_k^T  & \alpha_{k+1}     & \beta_{k+2}},
                                                              \nonumber
\\ e_{k+1}^TQ_k \underTkp \T & =
                     \bmat{0 & \gamma_{k+1} & \delta_{k+2}},    \nonumber
\end{align}
see \eqref{min7}, where $AV_{k+1}=V_{k+1}T_{k+1}$ and we take
$\delta_{k+2}=0$ if $k = \ell-1$, so
\begin{align*}
   Ar_k &= \phi_k V_{k+2} \bmat{0 & \gamma_{k+1} & \delta_{k+2}}\T
         = \phi_k \left( \gamma_{k+1} v_{k+1} + \delta_{k+2} v_{k+2}
                 \right),
\\ \psi_k^2 &\equiv \norm{Ar_k}^2 = \norm{r_k}^2
      \left( [\gamma_{k+1}]^2 + [\delta_{k+2}]^2 \right).
\end{align*}
For the recurrence relations of $Ax_k$ and its norm, we have
\begin{align*}
   Ax_k &= AV_k y_k = V_{k+1} \underTk y_k
                    = V_{k+1} Q_k^T  \bmat{R_k \\ 0} y_k
                    = V_{k+1} Q_k^T  \bmat{t_k \\ 0},
\\ \omega_k^2 &\equiv \norm{Ax_k}^2 = \norm{t_k}^2
                                    = \norm{t_{k-1}}^2 + \tau_k^2
                                    = \omega_{k-1}^2   + \tau_k^2.
\end{align*}
\end{proof}

Note that even using finite precision the expression for $\psi_k^2$ is
extremely accurate for the versions of the Lanczos algorithm given in
section~\ref{sec:Lanczos}, since (taking $\norm{v_j}=1$ with negligible
error), $\norm{Ar_k}^2 = \phi_k^2( [\gamma_{k+1}]^2
+2\gamma_{k+1}\delta_{k+2} v_{k+1}^T v_{k+2} +
[\delta_{k+2}]^2)$, where from \eqref{min7}
$|\delta_{k+2}|\leq \beta_{k+2}$, while from \cite[(18)]{P76}
$\beta_{k+2}|v_{k+1}^T v_{k+2}|\leq O(\varepsilon)\norm{A}$, and with
$|\gamma_{k+1}|\leq \norm{A}$, see \cite[(19)]{P76}, we see that
$|\gamma_{k+1}\delta_{k+2} v_{k+1}^T v_{k+2}| \leq
O(\varepsilon)\norm{A}^2$.

Typically $\norm{Ar_k}$ is not monotonic, while clearly $\norm{r_k}$
and $\norm{Ax_k}$ \emph{are} monotonic.
In the eigensystem $A =U \Lambda U\T$, let $U = \bmat{U_1 \!&\! U_2}$,
where the eigenvectors $U_1$ correspond to nonzero eigenvalues.  Then
$P_A \equiv U_1 U_1\T$
and $P^{\perp}_A \equiv U_2 U_2\T$
are orthogonal projectors \cite{TB} onto the range and nullspace of
$A$.  For general linear LS problems, Chang et al.\
\cite{CPP09} characterize the dynamics of $\norm{r_k}$ and
$\norm{A^Tr_k}$ in three phases defined in terms of the ratios among
$\norm{r_k}$, $\norm{P_A r_k}$, and $\norm{P^{\perp}_A r_k}$, and
propose two new stopping criteria for iterative solvers.
The expositions \cite{AG08, JT10} show that these estimates are cheaply computable in
\CGLS{} and \LSQR{} \cite{PS82a,PS82b}.

\subsection{Effects of rounding errors in \MINRES}

\MINRES{} should stop if $R_k$ is singular (which theoretically
implies $k=\ell$ and $A$ is singular).  Singularity was not discussed
by Paige and Saunders \cite{PS75}, but they did raise the question: Is
\MINRES{} stable when $R_k$ is ill-conditioned?  Their concern was
that $\norm{D_k}$ could be large in \eqref{eq:rdeqv}, and there could
then be cancellation in forming $x_{k-1} + \tau_k d_k$ in
\eqref{minresxk}.

Sleijpen, Van der Vorst, and Modersitzki \cite{SVM00} analyzed the
effects of rounding errors in \MINRES{} and reported examples of
apparent failure with a matrix of the form $A = QDQ\T$, where $D$ is
an ill-conditioned diagonal matrix and $Q$ involves a single plane
rotation.  We were unable to reproduce \MINRES{}'s performance on the
two examples defined in Figure~4 of their paper, but we modified the
examples by using an $n \times n$ Householder transformation for $Q$,
and then observed similar difficulties with \MINRES{}---see
Figure~\ref{figDPtestSing3_DP}.
The recurred residual norm $\phi^M_k$ is a good approximation to the
directly computed $\norm{r^M_k}$ until the last few iterations.  The
recurred norms $\phi^M_k$ then keep decreasing but the directly
computed norms $\norm{r^M_k}$ become stagnant or even increase (see
the lower subplots in Figure~\ref{figDPtestSing3_DP}).

\begin{remark}
  Note that we do want $\phi_k$ to keep decreasing on compatible
  systems, so that the test $\phi_k \leq \mathit{tol} (\norm{A}
  \norm{x_k}+\norm{b})$ with $\mathit{tol} \ge \varepsilon$ will
  eventually be satisfied even if the computed $\norm{r_k}$ is no
  longer as small as $\phi_k$.
\end{remark}

The analysis in \cite{SVM00} focuses on the rounding errors involved
in the $n$ lower triangular solves $R_k\T D_k\T = V_k\T$ (one solve
for each row of $D_k$), compared to the single upper triangular solve
$R_k y_k = t_k$ (followed by $x_k = V_k y_k$) that would be possible
at the final $k$ if all of $V_k$ were stored as in \GMRES{}
\cite{SS86}.  We shall see that a key feature of \MINRESQLP{} is that
a single lower triangular solve suffices with no need to store $V_k$,
much the same as in \SYMMLQ.

\section{Orthogonal decompositions for singular matrices}  \label{sec:QLPreview}

In 1999 Stewart proposed the \emph{pivoted QLP decomposition}
\cite{S99}, which is equivalent to two consecutive QR factorizations
with column interchanges, first on $A$, then on $R^T$:
\begin{equation}  \label{qlpeqn1}
   Q_R A \Pi_R = \bmat{ R & S \\ 0&0 }, \qquad
   Q_L \bmat{R^T & 0 \\ S^T & 0} \Pi_L = \bmat{\hat{R} & 0 \\ 0&0},
\end{equation}
giving nonnegative diagonal elements,
where $\Pi_R$ and $\Pi_L$ are permutations chosen to maximize the
next diagonal element of $R$ and $\hat{R}$ at each stage.  This gives
$A = QLP$, where
\begin{equation*}
  Q = Q_R^T \Pi_L, \qquad
  L = \bmat{\hat{R}^T & 0 \\ 0&0}, \qquad
  P =  Q_L \Pi_R^T,
\label{qlpeqn2}
\end{equation*}
with $Q$ and $P$ orthogonal.  Stewart demonstrated that the diagonals
of $L$ (the \emph{$L$-values}) give better singular-value estimates
than the diagonals of $R$ (the \emph{$R$-values}), and the accuracy
is particularly good for the extreme singular values $\sigma_1$ and
$\sigma_n$:
\begin{equation}  \label{qlpeqn1a}
  R_{ii} \approx \sigma_i, \quad L_{ii} \approx \sigma_i, \quad
  \sigma_1 \ge  \max_i L_{ii} \ge \max_i R_{ii}, \quad
  \min_i R_{ii} \ge \min_i L_{ii} \ge \sigma_n.\!\!
\end{equation}
The first permutation $\Pi_R$ in pivoted QLP is important.  The main
purpose of the second permutation $\Pi_L$ is to ensure that the
$L$-values present themselves in decreasing order, which is not always
necessary.  If $\Pi_R = \Pi_L = I$, it is simply called the \emph{QLP
  decomposition}.

\begin{comment}
\begin{remark} \label{rem:ns}
  In \eqref{qlpeqn3a}, $\beta_{k+1}>0$ implies $R_k$ and $L_k$ are
  nonsingular (and $k<\ell$).
  %Suppose $\beta_{k+1}=0$.  In
  %section~\ref{sec:Lanczos}, $b \in \mathcal{R}(A)$ if and only if
  %$v_1,\ldots, v_k \perp \mathcal{N}(A)$,
  %so $b \in \mathcal{R}(A)$ if and only if $T_k$ is nonsingular (since
  %$AV_k=V_kT_k$).
  If $T_\ell$ is nonsingular then $T_\ell y = \beta_1 e_1$
  has a unique solution and $AV_ky_k = V_kT_ky_k = b$, where
  $x_k = V_ky_k\perp \Null(A)$, and $r_k = 0$; otherwise $b \not\in
  \range(A)$, $T_k$ is singular, and the residual is nonzero.
\end{remark}
\end{comment}

\section{\MINRESQLP}  \label{sec:MINRESQLP}

We now develop \MINRESQLP{} for solving ill-conditioned or singular
symmetric systems $Ax \approx b$.  The Lanczos framework is the same
as in \MINRES, but we handle $T_\ell$ in \eqref{eqn:LSsubprob-ell} with extra care
when it is rank-deficient.
%      or $T_k$ is ill-conditioned.
%%% Can't say this when lots of T_k could be singular.
%%% We could say
%      or earlier $\underTk$ are ill-conditioned.
%%% but it confuses things.
In this case, the normal approach
to solving \eqref{eqn:LSsubprob-ell} is via a QLP decomposition of $T_\ell$ to
obtain the (unique) minimum-length solution $y_\ell$ \cite{S99,GV}.
Thus in \MINRESQLP{} we use a QLP decomposition of $\underTk$ in
subproblem \eqref{eqn:LSsubprob} for all $k \le \ell$.  This is the
\MINRES{} QR \eqref{QRfac} followed by an LQ factorization of $R_k$:
\begin{equation}  \label{qlpeqn3a}
  Q_k \underTk =  \bmat{R_k \\ 0}, \qquad
  R_k P_k = L_k,  \qquad \textrm{so that}\quad
  Q_k \underTk P_k =  \bmat{L_k \\ 0},
\end{equation}
where $Q_k$ and $P_k$ are orthogonal, $R_k$ is upper tridiagonal and
$L_k$ is lower tridiagonal.  When $k < \ell$, $R_k$ and $L_k$ are
nonsingular.  \MINRESQLP{} obtains the same solution as \MINRES{}, but
by a different process (and with different rounding errors).  Defining
$u$ by $y=P_ku$, we see from \eqref{QRfac} that
\[
   Q_k (\underTk y - \beta_1 e_1)
     = \bmat{ L_k  \\ 0}u-\bmat{ t_k \\ \phi_k},
\]
and \eqref{eqn:LSsubprob} is solved by $L_ku_k = t_k$ and $y_k = P_ku_k$.
The \MINRESQLP{} estimate of $x$ is therefore
\(
  x_k = V_ky_k = V_kP_ku_k = W_ku_k,
\)
with theoretically orthonormal $W_k\equiv V_kP_k$.
% Thus, there should be the same numerical advantage that \SYMMLQ{} has over \MINRES.

We will see that only the last three columns of $V_k$ are needed
to update $x_k$.

\begin{comment}
\subsection{The \MINRESQLP{} subproblem}

If $k < \ell$, $\underTk$ has full column rank and $y_k$ solves
the same subproblem $\min \norm{\underTk y - \beta_1 e_1}$
\eqref{eqn:LSsubprob} as \MINRES{}.
If $A$ is singular and $b \not\in \range(A)$,
$T_\ell$ is singular and we choose $y_\ell$ to solve the
subproblem
\begin{equation}  \label{qlpsubproblemk}
  \min \norm{y}
         \quad\text{s.t.}\quad
         y \in \arg\min_{y \in \mathbb{R}^\ell}
            \norm{T_\ell y- \beta_1 e_1}.
\end{equation}
For all $k \le \ell$ the solutions $y_k$ define
$x_k = V_k y_k$ % \in \mathcal{K}_k (A,b)$
as the $k$th approximation to $x$.
As usual, $y_k$ is not actually computed because all its
elements change when $k$ increases.
\end{comment}

\subsection{The QLP factorization of $\underTk$}

The QLP decomposition of each $\underTk$ must be without
permutations in order to ensure inexpensive updating of the factors as $k$
increases.  Our experience is that the desired rank-revealing
properties \eqref{qlpeqn1a} tend to be retained, perhaps because of
the tridiagonal structure of $\underTk$ and the convergence
properties of the underlying Lanczos process.

For $k < \ell$, the QLP decomposition of $\underTk$ \eqref{qlpeqn3a}
%\begin{equation}  \label{qlpeqn3}
%  Q_k \underTk = \bmat{R_k \\ 0},
%  \qquad
%  R_k P_k = L_k,
%  \qquad
%  Q_k \underTk P_k = \bmat{L_k \\ 0},
%\end{equation}
gives nonsingular tridiagonal $R_k$ and $L_k$.  As in \MINRES,
$Q_k$ is a product of Householder reflectors, see \eqref{QRfac} and
\eqref{min7}, while $P_k$ involves a product of \emph{pairs} of
essentially $2\times 2$
reflectors:
\begin{equation*}
  Q_k = Q_{k,k+1}\ \cdots \ Q_{3,4} \ \ Q_{2,3} \ \ Q_{1,2},\qquad
  P_k  =  P_{1,2} \ \ P_{1,3} P_{2,3}
  \ \cdots \ \ P_{k-2,k} P_{k-1,k}.
\end{equation*}
For \MINRESQLP{} to be efficient, in the $k$th iteration ($k\ge 3$)
the application of the left reflector $Q_{k,k+1}$ is followed
immediately by the right reflectors $P_{k-2,k}, P_{k-1,k}$, so that
only the last $2 \times 2$ principal submatrix of %% ?? $\underline{L_k}$
the transformed $\underTk$ will be changed in future
iterations.
These ideas can be understood more easily from
Figure~\ref{QLPfig}
and the following compact form, which represents the actions of
right reflectors on $\underTk$ (additional to $Q_{k,k+1}$ \eqref{min7}):
\begin{align}
\label{qlpRightRef}
 &\hspace*{13pt}
  \bmat{\gamma_{k-2}^{(5)} &                    & \epsilon_k
     \\ \vartheta_{k-1}    & \gamma_{k-1}^{(4)} & \delta_k^{(2)}
     \\                    &                    & \gamma_{k}^{(2)}
       }
  \bmat{c_{k2} &   & \!\!\!\phantom-s_{k2}
     \\        & 1 &
     \\ s_{k2} &   & \!\!-c_{k2}
       }
  \bmat{ 1
     \\    & c_{k3} & \!\!\!\phantom-s_{k3}
     \\    & s_{k3} & \!\!-c_{k3}
       }
       \nonumber
\\ &=
  \bmat{\gamma_{k-2}^{(6)}
     \\ \vartheta_{k-1}^{(2)} & \gamma_{k-1}^{(4)}  & \delta_k^{(3)}
     \\ \eta_{k}              &                     & \gamma_{k}^{(3)}
       }
  \bmat{ 1
     \\    & c_{k3} & \!\!\!\phantom-s_{k3}
     \\    & s_{k3} & \!\!-c_{k3}
       }
= \bmat{\gamma_{k-2}^{(6)}
     \\ \vartheta_{k-1}^{(2)} & \gamma_{k-1}^{(5)}  &
     \\ \eta_{k}              & \vartheta_{k}       & \gamma_{k}^{(4)}
       }.
\end{align}

\begin{figure}[p]    %%% Figure 5.1
  \centering
  \includegraphics[width=\textwidth]{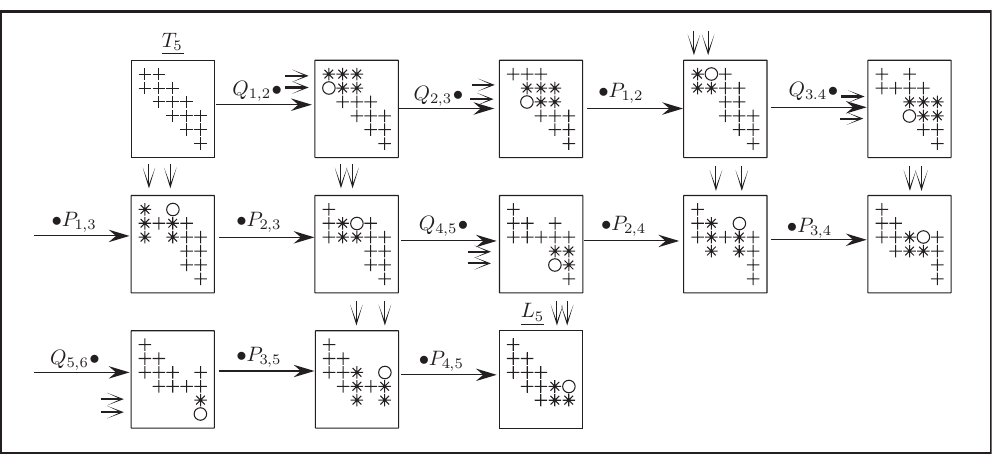}
  \caption{QLP with left and right reflectors interleaved on $\underline{T_5}$.
   This figure can be reproduced with the help of \texttt{QLPfig5.m}.}
  \label{QLPfig}
\end{figure}

%SC: Changed "Diagonals" to the following
\subsection{The diagonals of $L_k$}

Figure~\ref{Davis1177case2Eig4fig} shows the relation between the
singular values of $A$ and the diagonal elements of $R_k$ and
$L_k$ with $k=19$. This illustrates \eqref{qlpeqn1a} for matrix
ID 1177 from \cite{UFSMC} with $n=25$.

\begin{figure}[p]    %%% Figure 5.2
\centering
\hspace*{-0.1in}
\includegraphics[width=.95\textwidth]{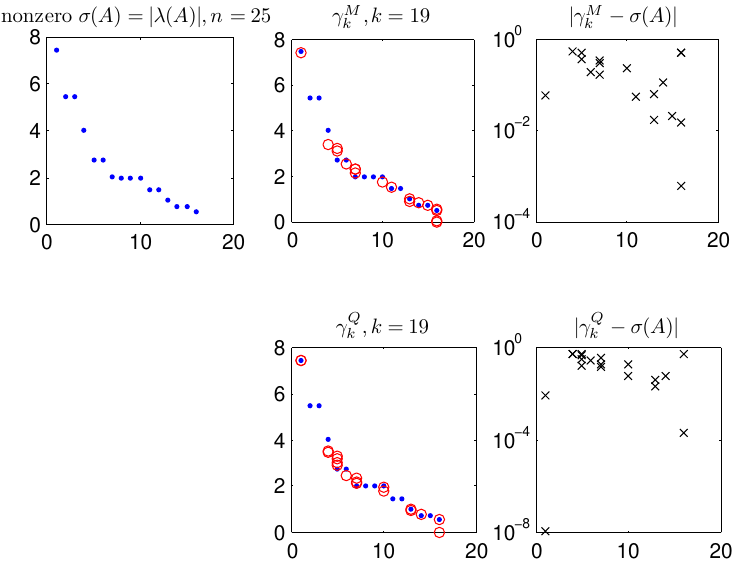}
\caption{
  \textbf{Upper left:}
  Nonzero singular values of $A$ sorted in decreasing order.
  \textbf{Upper middle and right:}
  The diagonals
  $\gamma_k^M$
  of $R_k$ (red circles)
  from MINRES are plotted as red circles above or below the
  nearest singular value of $A$.  They approximate the extreme nonzero
  singular values of $A$ well.
  \textbf{Lower:} The diagonals
  $\gamma_k^Q$
  of $L_k$ (red circles) from \MINRESQLP{} approximate the extreme
  nonzero singular values of $A$ even better.  An implication is that
  the ratio of the largest and smallest diagonals of $L_k$ provides a
  good estimate of $\kappa(A)$.  To reproduce this figure, run
  \texttt{test\_minresqlp\_fig3(2)}.}
\label{Davis1177case2Eig4fig}
\end{figure}

\subsection{Solving the subproblem}

With $y_k=P_k u_k$, subproblem \eqref{eqn:LSsubprob} becomes
%$L_k u_k = t_k$ and $y_k = P_k u_k$.
%equivalent to
\begin{equation}  \label{Lsubproblem}
  u_k = \arg \min_{u \in \mathbb{R}^k}
            \,\normm{\bmat{L_k \\ 0} u - \bmat{t_k \\ \phi_k}},
\end{equation}
where $t_k$ and $\phi_k$ are as in \eqref{QRfac} and \eqref{QRfac2}.
At the \textit{start}
of iteration $k$, the first $k\!-\!3$ elements of $u_k$, denoted by
$\mu_j$ for $j \le k\!-\!3$,
are known from previous iterations; see the 10th matrix in
Figure~\ref{QLPfig}.
The remainder depend on the rank of $L_k$.

\begin{enumerate}
\item
If $\rank({L_k})=k$ (so $k < \ell$, or $k = \ell$ and $b \in \range(A)$),
we need to solve the last three equations of $L_k u_k = t_k$:
\begin{equation}
  \bmat{\gamma_{k-2}^{(6)}
     \\ \vartheta_{k-1}^{(2)} & \gamma_{k-1}^{(5)}  &
     \\ \eta_k              & \vartheta_k       & \gamma_k^{(4)}
       }
  \bmat{\mu_{k-2}^{(3)}
     \\ \mu_{k-1}^{(2)}
     \\ \mu_k
       }
= \bmat{{\bar\tau_{k-2}}
     \\ {\bar\tau_{k-1}}
     \\ \tau_k
       }
\equiv
  \bmat{\tau_{k-2} - \eta_{k-2} \mu_{k-4}^{(4)}
                   - \vartheta_{k-2} \mu_{k-3}^{(3)}
     \\ \tau_{k-1} - \eta_{k-1} \mu_{k-3}^{(3)}
     \\ \tau_k
       }.
\end{equation}

\item
If %$\rank({L_k})=k\!-\!1$
%(so $k = \ell$ and $b \not\in \range(A)$),
$k = \ell$ and $b \not\in \range(A)$,
the last row and column of $L_k$ are zero, % see \eqref{qlpeqn3a},
and we only need to solve the last two equations of
  $L_{k-1} u_{k-1} = t_{k-1}$, where
  \begin{equation}  \label{eq:Lut}
     L_k = \bmat{L_{k-1} \\ 0 & 0}  \text{,}\quad
     u_k = \bmat{u_{k-1} \\ 0}      \text{,}\quad
     \bmat{\gamma_{k-2}^{(6)}
        \\ \vartheta_{k-1}^{(2)} & \gamma_{k-1}^{(5)}
          }
     \bmat{\mu_{k-2}^{(3)}
        \\ \mu_{k-1}^{(2)}
          }
   = \bmat{{\bar\tau}_{k-2}
        \\ {\bar\tau}_{k-1}
       }.
  \end{equation}
\end{enumerate}
The corresponding solution estimate is
  $x_k = V_k y_k = V_k P_k u_k = W_ku_k$, where
\begin{align}
  W_k \equiv V_k P_k
         &=  \bmat{ V_{k-1} P_{k-1} &  v_k } P_{k-2,k} P_{k-1,k}   \label{eq:wvp}
 \\      &=  \bmat{ W_{k-3}^{(4)} &  w_{k-2}^{(3)} & w_{k-1}^{(2)} & v_k} P_{k-2,k} P_{k-1,k} \nonumber
 \\      &=  \bmat{ W_{k-3}^{(4)} &  w_{k-2}^{(4)} & w_{k-1}^{(3)} & w_k^{(2)}},              \nonumber
 \\  W_k^TW_k &= I_k, \qquad \range(W_k)=\mathcal{K}_k(A,b),                            \label{W_ortho}
\end{align}
and we update
$x_{k-2}$ and compute $x_k$ by short-recurrence orthogonal steps:
\begin{align}
x_{k-2}^{(2)}  &= x_{k-3}^{(2)} + w_{k-2}^{(4)} \mu_{k-2}^{(3)}
\text{, where } x_{k-3}^{(2)} \equiv W_{k-3}^{(4)} u_{k-3}^{(3)}, \label{qlpeqnsol1}
\\     x_k    &= x_{k-2}^{(2)} + w_{k-1}^{(3)} \mu_{k-1}^{(2)}
                      + w_k    ^{(2)} \mu_k.      \label{qlpeqnsol2}
\end{align}

\subsection{Termination}  \label{sec:term}

When $k=\ell$, $Q_{k,k+1}$ is not formed or applied,
see \eqref{QRfac} and \eqref{min7}, and the QR factorization stops.
In \MINRESQLP, we still need to apply $P_{k-2,k}P_{k-1,k}$ on the
right to obtain the minimum-length solution; see Figure~\ref{QLPfig}.

%\begin{proposition} \label{min-len-krylov}
%If $\hat{x}$ is the minimum-length solution of $Ax \approx b$ in a Krylov subspace, then $\hat{x} = x^\dagger$.
%\end{proposition}
%\begin{proof}
%Let $\hat{x} = \arg\min\{\norm{x} \mid A\T Ax=A\T b, \; x \in \mathcal{K}_k(A,b)\}$.  First we want to show that $\hat{x} \in \range(A)$. Suppose $\hat{x}$ has a non-zero component in $\Null(A)$. Then $\hat{x} = x_r + x_n$ where $x_r \in \range(A)$ and $0\ne x_n\in \Null(A)$, and $\norm{\hat{x}}^2 = \norm{x_r}^2 + \norm{x_n}^2 > \norm{x_r}^2$ and $A\T A x_r = A\T b$. Thus $x_r$ is the min-length solution of $\norm{Ax-b}$ in the $k$-th Krylov subspace, which is a contradiction.
%
%We know that $x^\dagger$ is unique and
%$x^\dagger = \arg\min\{\norm{x} \mid A\T Ax=A\T b, \; x \in \mathbb{R}^n\}
%= \arg\min\{\norm{x} \mid A\T Ax=A\T b, \; x \in \range(A)\}$. Since $\hat{x} \in \range(A)$, we must have $\hat{x} = x^\dagger$.
%\end{proof}

\begin{theorem}[{\cite[Theorem 3.1]{C06}}]  \label{theorem-MINRES-QLP}
  In \MINRESQLP{}, $x_\ell = x^\dagger$. %is the minimum-length solution of $Ax \approx b$.
\end{theorem}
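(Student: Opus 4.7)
\textit{Proof proposal.} The plan is to split the argument into two cases according to whether $T_\ell$ is nonsingular.

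If $T_\ell$ is nonsingular, then $b \in \range(A)$ by the property cited in section~\ref{sec:Lanproperties}, $L_\ell$ in \eqref{qlpeqn3a} is also nonsingular, and $T_\ell y_\ell = \beta_1 e_1$ has a unique solution. In this case \MINRESQLP{} produces exactly the \MINRES{} iterate $x_\ell = V_\ell y_\ell$, so the conclusion $x_\ell = x^\dagger$ follows directly from Theorem~\ref{theorem-singular-compatible}.

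If $T_\ell$ is singular, the argument is more substantial. First, I would exploit $AV_\ell = V_\ell T_\ell$ and $V_\ell^T V_\ell = I_\ell$ to observe that, for any $y \in \mathbb{R}^\ell$ and $x = V_\ell y$,
\[
   \|Ax - b\| = \|V_\ell(T_\ell y - \beta_1 e_1)\| = \|T_\ell y - \beta_1 e_1\|,
   \qquad \|x\| = \|y\|.
\]
Thus the isometry $y \mapsto V_\ell y$ carries the unique minimum-length LS solution $y_\ell^\dagger$ of the subproblem \eqref{eqn:LSsubprob-ell} to the minimum-length LS solution of $Ax \approx b$ \emph{within} $\mathcal{K}_\ell(A,b)$. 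By design, the QLP factorization of $T_\ell$ in \MINRESQLP{} is the standard orthogonal route to the pseudoinverse solution of a rank-deficient least-squares system \cite{S99,GV}, and hence $x_\ell = V_\ell y_\ell^\dagger$.

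The main obstacle, and the central remaining step, is to verify that this ``within-$\mathcal{K}_\ell$'' minimum-length LS solution coincides with the global $x^\dagger$; equivalently, that $x^\dagger \in \mathcal{K}_\ell(A,b)$. For this I would use that termination of the Lanczos process ($\beta_{\ell+1}=0$) makes $\mathcal{K}_\ell$ an $A$-invariant subspace, which together with the symmetry of $A$ implies that $\mathcal{K}_\ell^\perp$ is also $A$-invariant. Writing any LS solution as $\hat{x} = \hat{x}_1 + \hat{x}_2$ with $\hat{x}_1 \in \mathcal{K}_\ell$ and $\hat{x}_2 \in \mathcal{K}_\ell^\perp$, and using $b = \beta_1 v_1 \in \mathcal{K}_\ell$ together with $A\hat{x}_2 \in \mathcal{K}_\ell^\perp$, I get the Pythagorean identity
\[
   \|A\hat{x} - b\|^2 = \|A\hat{x}_1 - b\|^2 + \|A\hat{x}_2\|^2,
\]
so the LS condition forces $A\hat{x}_2 = 0$, that is $\hat{x}_2 \in \Null(A)$. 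Specializing to $\hat{x} = x^\dagger$ and using that $x^\dagger \perp \Null(A)$ (which follows from $x^\dagger \in \range(A)$ for symmetric $A$), I obtain $\|x_2^\dagger\|^2 = \langle x^\dagger, x_2^\dagger\rangle = 0$, hence $x^\dagger \in \mathcal{K}_\ell$. Uniqueness of the minimum-length LS solution then yields $x_\ell = V_\ell y_\ell^\dagger = x^\dagger$.
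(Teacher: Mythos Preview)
Your proof is correct, and the case split mirrors the paper's, but in the singular case you take a genuinely different route. The paper argues that $x_\ell \in \range(A)$ directly: since $y_\ell$ is the minimum-length LS solution of $\norm{T_\ell y - \beta_1 e_1}$, one has $y_\ell \in \range(T_\ell)$, say $y_\ell = T_\ell z$, and then $x_\ell = V_\ell y_\ell = V_\ell T_\ell z = A V_\ell z \in \range(A)$; since any LS solution lies in $x^\dagger + \Null(A)$ and $\range(A)\perp\Null(A)$, this forces $x_\ell = x^\dagger$. You instead show $x^\dagger \in \mathcal{K}_\ell(A,b)$, using the $A$-invariance of $\mathcal{K}_\ell$ and $\mathcal{K}_\ell^\perp$ together with a Pythagorean splitting of the residual, and then invoke uniqueness of the min-length LS solution within $\mathcal{K}_\ell$. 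The paper's approach is the more algebraic one, staying close to the Lanczos and QLP identities already on the page; yours is a clean geometric argument that would work verbatim for any $A$-invariant Krylov-like subspace containing $b$, and does not rely on the internal structure of the QLP step beyond the fact that it produces $y_\ell^\dagger$.
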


\begin{proof}
%This can be seen from \eqref{Lsubproblem}--\eqref{eq:Lut} with $k=\ell$.
  When $b \in \range(A)$, the proof is the same as that for
  Theorem~\ref{theorem-singular-compatible}.

  When $b \notin \range(A)$, for all $u = [u_{\ell-1} \ \,\mu_k]\T
  \in \mathbb{R}^\ell$ that solves \eqref{Lsubproblem}, \MINRESQLP{}
  returns the min-length LS solution $u_\ell = [u_{\ell-1} \ \,0]\T$
  by the construction in \eqref{eq:Lut}.  For any
  $x\in\range(W_{\ell}) = \mathcal{K}_{\ell}(A,b)$ by (\ref{W_ortho}),
\begin{align*}
  \norm{Ax-b} &= \norm{AW_\ell u - b} = \norm{AV_\ell P_\ell u - b}
    = \norm{V_\ell T_\ell P_\ell u - \beta_1V_\ell e_1}
    =       \norm{T_\ell P_\ell u - \beta_1 e_1}
\\ &=\normm{Q_{\ell-1} T_\ell P_\ell u - \bmat{t_{\ell-1} \\ \phi_{\ell-1}}} =
   \normm{\bmat{L_{\ell-1} & 0 \\ 0 & 0} u -\bmat{t_{\ell-1} \\ \phi_{\ell-1}}}.
\end{align*}
Since $L_{\ell-1}$ is nonsingular, $\phi_{\ell-1} = \min \norm{Ax-b}$
can be achieved by $x_\ell = W_\ell u_\ell = W_{\ell-1} u_{\ell-1}$
and $\norm{x_\ell} = \norm{W_{\ell-1} u_{\ell-1}} = \norm{u_{\ell-1}}$
by (\ref{W_ortho}). Thus  $x_\ell$ is the min-length LS solution of $\norm{Ax-b}$ in
$\mathcal{K}_{\ell}(A,b)$, i.e., $x_\ell = \arg\min\{\norm{x} \mid A^2 x=A b, \; x \in \mathcal{K}_{\ell}(A,b)\}$.
Likewise $y_\ell = P_\ell u_\ell$ is the min-length LS solution of $\norm{T_\ell y - \beta_1 e_1}$ and so $y_\ell \in \range(T_\ell)$, i.e. $y_\ell = T_\ell z$ for some $z$. Thus
$x_\ell = V_\ell y_\ell = V_\ell T_\ell z = A V_\ell z \in \range(A)$.
We know that $x^\dagger = \arg\min\{\norm{x} \mid A^2 x=A b, \; x \in \mathbb{R}^n\}$ is unique and $x^\dagger \in \range(A)$.  Since $x_\ell \in \range(A)$, we must have $x_\ell = x^\dagger$.
%, i.e., $x_\ell$ is the min-length LS solution of $\norm{Ax-b}$ in $\mathbb{R}^n$.
\end{proof}

\subsection{Transfer from \MINRES{} to \MINRESQLP{}}
\label{transfer}

On well-conditioned systems, \MINRES{} and \MINRESQLP{} behave very
similarly. However, %\MINRES{} is cheaper in terms of memory and flops.
\MINRESQLP{} requires one more vector of storage, and each iteration
needs 4 more axpy's ($y \leftarrow \alpha x + y$) and 3 more vector
scalings ($x \leftarrow \alpha x$).  Thus it would be a desirable
feature to invoke \MINRESQLP{} from \MINRES{} only if $A$ is
ill-conditioned or singular.  The key idea is to transfer %from \MINRES{}
to \MINRESQLP{} at an iteration where $\underTk$ %has full rank and
is not yet too ill-conditioned.  The \MINRES{} and
\MINRESQLP{} solution estimates are the same, so from
\eqref{minresxk}, \eqref{qlpeqnsol2}, and \eqref{Lsubproblem}: $ x_k^M
= x_k \Longleftrightarrow D_k t_k = W_k u_k = W_k L_k^{-1} t_k$.  Now
from \eqref{eq:rdeqv}, \eqref{qlpeqn3a}, and \eqref{eq:wvp},
\begin{equation}  \label{transfereqn1}
  D_k L_k=(V_kR_k^{-1})(R_kP_k)=V_kP_k=W_k,
\end{equation}
and the last three columns of $W_k$ can be obtained from the last
three columns of $D_k$ and $L_k$.  (Thus, we transfer the three
\MINRES{} basis vectors $d_{k-2}, d_{k-1}, d_k$ to $w_{k-2}, w_{k-1},
w_k$.)  In addition, we need to generate $\smash{x_{k-2}^{(2)}}$
using \eqref{qlpeqnsol1}:
\[
  x_{k-2}^{(2)} = x_k^M - w_{k-1}^{(3)} \mu_{k-1}^{(2)} - w_k^{(2)} \mu_k .
\]

It is clear from \eqref{transfereqn1} that we still need to do the
right transformation $R_k P_k = L_k$ in the \MINRES{} phase and keep
the last $3 \times 3$ principal submatrix of $L_k$ for each $k$ so
that we are ready to transfer to \MINRESQLP{} when necessary.  We then
obtain a short recurrence for $\norm{x_k}$ (see
section~\ref{subsectsolnorm}) and for this computation we save flops
relative to the original \MINRES{} algorithm, where $\norm{x_k}$ is
computed directly.

In the implementation, the \MINRES{} iterates transfer to \MINRESQLP{}
iterates when an estimate of the condition number of $T_k$ (see
\eqref{cond2AQLP}) exceeds an input parameter $\mathit{trancond}$.
Thus, $\mathit{trancond} > 1/\varepsilon$ leads to \MINRES{} iterates
throughout, while $\mathit{trancond} = 1$ generates \MINRESQLP{}
iterates from the start.

\subsection{Comparison of Lanczos-based solvers}  \label{sec:compare}

We compare \MINRESQLP{} with \CG, \SYMMLQ, and \MINRES{} in Tables
\ref{tableQLPsubproblems}--\ref{tableQLPbasis} in terms of subproblem
definitions, basis, solution estimates, flops and memory.  A careful
implementation of \SYMMLQ{} provides a point in
$\mathcal{K}_{k+1}(A,b)$ as shown.  All solvers need storage for
$v_k$, $v_{k+1}$, $x_k$, and a product $p_k = Av_k$ each iteration.
Some additional work-vectors are needed for each method (e.g.,
$d_{k-1}$ and $d_k$ for \MINRES, giving 7 work-vectors in total).

\begin{table}[t!]    %%% Table 5.1
\caption{Subproblems defining $x_k$ for CG, SYMMLQ, MINRES, and MINRES-QLP.}
\label{tableQLPsubproblems}
\centering
\begin{tabular}{|l|l|l|l|}
   \hline
   Method  &  Subproblem  &  Factorization  &  Estimate of $x_k$
\\ \hline \tablestrut
   cgLanczos            & $T_k y_k = \beta_1 e_1$ & Cholesky: & $x_k^C = V_k y_k$
\\ \cite{HS52,PS75,SOL} &                         & $T_k = L_k D_k L_k\T$
                                                  & $\quad \in \mathcal{K}_k(A,b)$
\\[0pt] \hline \tablestrut
   \SYMMLQ  & $y_{k+1} = \arg\min_{y \in \mathbb{R}^{k+1}} \norm{y}$
                        & LQ:                     & $x_k^L = V_{k+1} y_{k+1}$
\\ \cite{PS75,S95}      & \quad s.t.~$\underline{T_k\T} y = \beta_1 e_1$
                        & $\underTk\T  Q_k\T  = \bmat{L_k & \!\!\!0}$
                        & \quad $\in \mathcal{K}_{k+1}(A,b)$
\\[2pt] \hline \tablestrut
  \MINRES               & ${\displaystyle y_k =
                          \arg\min_{y\in\mathbb{R}^k}
                          \norm{\underTk y - \beta_1 e_1}}$
                               & QR:     & $x_k^M = V_k y_k$
\\[-10pt]  \cite{PS75}  &      & $Q_k\underTk = \raisebox{4pt}{$\bmat{R_k \\ 0}$}$
                                         & ${\quad\in\mathcal{K}_k}{(A,b)}$
\\[2pt] \hline \tablestrut
   \MINRESQLP\!\!       & $y_k = \arg\min_{y\in\mathbb{R}^k} \norm{y}$
                               & {QLP:}  & $x_k^Q =V_k y_k$
\\[-4pt] \cite{C06}     & s.t.~$y \in \arg\min \norm{\underTk y - \beta_1 e_1}$\!\!
                               & $Q_k\underTk P_k = \raisebox{4pt}{$\bmat{L_k\\0}$}$
                                         & ${\quad \in \mathcal{K}_k}{(A,b)}$
\\[0pt] \hline
\end{tabular}
\end{table}

\begin{table}[t!]    %%% Table 5.2
\caption{Bases, subproblem solutions, storage, and work for each method.}
\label{tableQLPbasis}
\centering
\begin{tabular}{|l|l|l|l|c|}
   \hline
   Method & New basis & $ \quad\quad\quad z_k, t_k, u_k $ & $x_k$ estimate
                      & \!\!vecs \ flops\!\!
\\ \hline \tablestrut
   cgLanczos & $W_k \equiv V_k L_k^{-T}$ & $ L_k D_k z_k =\beta_1 e_1$
             & $x_k^C \!=\! W_k z_k$
             & 5 \ \ $\ 8n$
\\ \hline  \rule[0ex]{0ex}{4ex}%
   \SYMMLQ   & $W_k\equiv V_{k+1} Q_k^T \bmat{I_k \\ 0}$\!\! & $L_k z_k \!=\!\beta_1 e_1$
             & $x_k^L \!=\! W_k z_k$
             & 6 \ \ $\ 9n$
\\[8pt] \hline \tablestrut
   \MINRES   & $D_k \equiv V_k R_k^{-1}$
             & $\ \ \ \ t_k \!=\!\beta_1 \mystrut\bmat{I_k & \!\!\!0} Q_k e_1$\!\!
             & $x_k^M \!=\! D_k t_k $
             & 7 \ \ $\ 9n$
\\ \hline \tablestrut
   \MINRESQLP\!\! & $W_k \equiv V_k P_k$
             & $L_k u_k \!=\!\beta_1 \mystrut\bmat{I_k & \!\!\!0} Q_k e_1$\!\!
             & $x_k^Q \!=\! W_k u_k$
             & 8 \ \ $14n$
\\ \hline
\end{tabular}
\end{table}

\section{Stopping conditions and norm estimates}  \label{sec:QLPstop}

This section derives several norm estimates that are computed in
\MINRESQLP{}.  As before, we assume exact arithmetic throughout, so
that $V_k$ and $Q_k$ are orthonormal.
Table~\ref{tab-stopping-conditions} summarizes how the norm estimates
are used to formulate three groups of stopping conditions.
The second NRBE test $\norm{Ar_k} \le \norm{A} \norm{r_k} \mathit{tol}$
is from Stewart \cite{Stewart-Rice-1977} with symmetric $A$.

\begin{table}[tb]    %%% Table 6.1
\caption{Stopping conditions in MINRES-QLP.
   NRBE means normwise relative backward error, and
   $\mathit{tol}$, $\mathit{maxit}$, $\mathit{maxcond}$, and
   $\mathit{maxxnorm}$ are input parameters.  All norms and
   $\kappa(A)$ are estimated by
   MINRES-QLP.}
\label{tab-stopping-conditions}
\begin{center}
  \begin{tabular}{|l|l|l|}
    \hline
     Lanczos & NRBE
             & Regularization attempts
  \\ \hline $ $ & &
  \\[-8pt]
     $\beta_{k+1} \le n\norm{A} \varepsilon$
             & $\norm{r_k} / \left(\norm{A}\norm{x_k}+\norm{b}\right)
                   \le \mathit{tol}$
             & $\kappa(A) \geq \mathit{maxcond}$
  \\[4pt]
     $k = \mathit{maxit}$
             & $\norm{Ar_k} / \left(\norm{A} \norm{r_k} \right)
                   \le \mathit{tol}$
             & $\norm{x_k} \geq  \mathit{maxxnorm}$
  \\ \hline
  \end{tabular}
\end{center}
\end{table}

\subsection{Residual and residual norm}

First we derive recurrence relations for $r_k$ and its norm
$\phi_k \equiv \norm{r_k}$.
%In practice, $\rank(L_\ell)$ will be a reliable measure of $\rank(T_\ell)$.

%The following lemma says, among other things, that the intermediate
%$r_k$'s in \MINRESQLP{} are \emph{not} orthogonal to $\mathcal{K}_k(A,b)$.

\begin{lemma}[$r_k$ and $\norm{r_k}$ for \MINRESQLP{}
              and monotonicity of $\norm{r_k}$]
  \label{minresqlp_r}
%\[
%  \begin{array}{@{}l@{\quad}l@{\quad}l@{}}
%     \text{If } k < \ell, \text{ then } \rank(L_k)=k,
%        & r_k    = s_k^2 r_{k-1}\! -\! \phi_k c_k v_{k+1}, & \phi_k = \phi_{k-1} s_k;
%  \\ \text{If } k=\ell\ \&\ \rank(L_\ell)=\ell,       \text{ then}
%        & r_\ell = 0;                                   &
%  \\ \text{If } k=\ell\ \&\ \rank(L_\ell)=\ell\!-\!1, \text{ then}
%        & r_\ell = r_{\ell-1} \ne 0,
%        & \norm{r_\ell}  = \phi_{\ell-1};
%\end{array}
%\]
%where $\ \norm{r_k}=\phi_k$ when $k < \ell$.
%It follows that $\norm{r_k} \le \norm{r_{k-1}}$.
\begin{itemize}
\item If $k < \ell$, then $\rank(L_k) = k$,
         $r_k = s_k^2 r_{k-1} \!-\! \phi_k c_k v_{k+1}$,
         and $ \phi_k = \phi_{k-1} s_k > 0$.
\item If $\rank(L_\ell) = \ell$, then $r_\ell=0$.
\item If $\rank(L_\ell) = \ell\!-\!1$, then
         $r_\ell = r_{\ell-1} \ne 0$, and
         $\norm{r_\ell}  = \phi_{\ell-1} > 0$.
\end{itemize}
\end{lemma}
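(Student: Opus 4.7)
The plan is to split along the three regimes of the lemma and rely throughout on the compact residual identity $r_k = \phi_k V_{k+1} Q_k^T e_{k+1}$ derived in~\eqref{rk7}. For $k<\ell$, $R_k$ is nonsingular (so $L_k=R_kP_k$ is too), meaning the MINRES subproblem~\eqref{eqn:LSsubprob} has its unique minimizer $y_k$; MINRES-QLP therefore returns the same $x_k$ and the same $r_k$ as MINRES. Consequently the scalar identity $\phi_k = \phi_{k-1}s_k > 0$ is inherited directly from~\eqref{eq:normrk}, with $s_k>0$ because $\beta_{k+1}>0$ whenever $k<\ell$ (see the line after~\eqref{min7}).

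Next I would establish the recurrence for $r_k$ in the first bullet. The key step is to factor $Q_k = Q_{k,k+1}\,\mathrm{diag}(Q_{k-1},1)$ and, using the symmetry of $Q_{k,k+1}$, compute $Q_k^T e_{k+1} = s_k\,\bmat{Q_{k-1}^T e_k \\ 0} - c_k e_{k+1}$. Premultiplying by $V_{k+1}$ and recognising $V_k Q_{k-1}^T e_k = r_{k-1}/\phi_{k-1}$ from~\eqref{rk7} at index $k-1$, I obtain $r_k = (s_k\phi_k/\phi_{k-1})\,r_{k-1} - \phi_k c_k v_{k+1} = s_k^2 r_{k-1} - \phi_k c_k v_{k+1}$ after using $\phi_k/\phi_{k-1}=s_k$.

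For the terminal index $k=\ell$, observe that $R_\ell P_\ell = L_\ell$ with $P_\ell$ orthogonal, so $L_\ell$ is nonsingular iff $T_\ell$ is, and by property~4 of section~\ref{sec:Lanproperties} this is equivalent to $b\in\range(A)$. If $\rank(L_\ell)=\ell$, Theorem~\ref{theorem-MINRES-QLP} delivers $Ax_\ell=b$ and hence $r_\ell=0$. If $\rank(L_\ell)=\ell-1$, I would use the block form~\eqref{eq:Lut} to write $L_\ell u_\ell = \bmat{t_{\ell-1} \\ 0}$ and then compute
\[
   V_\ell^T r_\ell \;=\; \beta_1 e_1 - T_\ell P_\ell u_\ell
   \;=\; Q_{\ell-1}^T\!\left(\bmat{t_{\ell-1} \\ \phi_{\ell-1}} - L_\ell u_\ell\right)
   \;=\; \phi_{\ell-1}\,Q_{\ell-1}^T e_\ell,
\]
so that $r_\ell = \phi_{\ell-1}\,V_\ell Q_{\ell-1}^T e_\ell$. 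Comparing with~\eqref{rk7} at $k=\ell-1$ identifies $r_\ell$ with $r_{\ell-1}$, and $\|r_\ell\| = \phi_{\ell-1}>0$ since every $s_j>0$ for $j<\ell$.

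The main obstacle is the $\rank(L_\ell)=\ell-1$ case: one must verify that the QLP block partition~\eqref{eq:Lut} genuinely has the asserted last zero row and column (a consequence of the singularity of $T_\ell$ together with the non-permuted QLP construction), and then carefully track how $Q_{\ell-1}$ maps $\beta_1 e_1$ to $\bmat{t_{\ell-1} \\ \phi_{\ell-1}}$ so that exactly the $\phi_{\ell-1} e_\ell$ component survives after subtracting $L_\ell u_\ell$. Once that bookkeeping is settled, matching with~\eqref{rk7} is immediate and the remaining algebra is routine.
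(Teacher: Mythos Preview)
Your proposal is correct and follows essentially the same approach as the paper: for $k<\ell$ you expand $Q_k^Te_{k+1}$ via $Q_k=Q_{k,k+1}\,\diag(Q_{k-1},1)$ exactly as the paper does, and your computation for the singular terminal case is a slightly more explicit version of what the paper sketches by pointing to \eqref{eq:Lut} and Remark~\ref{rem:stop2}. The only cosmetic point is that writing $V_\ell^T r_\ell=\cdots$ and then passing to $r_\ell=\phi_{\ell-1}V_\ell Q_{\ell-1}^Te_\ell$ tacitly uses $r_\ell\in\range(V_\ell)$; it is cleaner (and matches the paper) to start from $r_\ell=V_\ell(\beta_1e_1-T_\ell P_\ell u_\ell)$ directly.
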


\begin{proof}
  If $k < \ell$, %$R_k$ is nonsingular and
  the residual is the same as for \MINRES{}.
  We have $\norm{r_k} = \phi_k = \phi_{k-1}s_k > 0$;
  see \eqref{QRfac2}--\eqref{eq:normrk}.
  Also from $r_k = \phi_k V_{k+1} Q_k^T e_{k+1}$ \eqref{rk7} we have
\begin{align}
    r_k  &= \phi_k \bmat{V_k & v_{k+1}}
                   \bmat{Q_{k-1}\T & \\ & 1}
                   \hbox{\footnotesize $
                   \bmat{I_{k-1} & & \\ & c_k & s_k\\ & s_k & -c_k}
                   \bmat{0 \\ 0 \\ 1}
                   $}
            \quad \mbox{by \eqref{QRfac}},    \nonumber
\\[-3pt] \displaybreak[0]
         &= \phi_k \bmat{V_k & v_{k+1}}
                   \bmat{Q_{k-1}\T & \\ & 1}
                   \bmat{s_k e_k\\ -c_k}
           =\phi_k \bmat{V_k & v_{k+1}}
                   \bmat{s_kQ_{k-1}\T e_k\\ -c_k}
   \nonumber %\label{rk7b}
\\[-3pt] \displaybreak[0]
         &= \phi_k s_k V_k Q_{k-1}\T e_k - \phi_k c_k v_{k+1}
          = \phi_{k-1} s_k^2 V_k Q_{k-1}\T e_k - \phi_k c_k v_{k+1}
   \nonumber
\\ \displaybreak[0]
         &= s_k^2 r_{k-1} - \phi_k c_k v_{k+1} \text{ by \eqref{rk7}}.
   \nonumber %\label{rk8}
\end{align}
% If $k = \ell$, from Theorem~\ref{theorem-MINRES-QLP}, %Remark~\ref{rem:ns},
% $b\in \mathcal{R}(A)$ if and only if $T_k$ is nonsingular.
% But $\rank(L_k)=k$ implies $R_k$ and $T_k$ are
% nonsingular, and so $b\in \mathcal{R}(A)$ and $r_k=0$.
%If $\rank(L_\ell)=\ell\!-\!1$,
If $T_\ell$ is nonsingular, $r_\ell = 0$.
Otherwise $Q_{\ell-1,\ell}$ has made the last row of
$R_\ell$ zero, so the last row and column of $L_\ell$ are zero; see
\eqref{eq:Lut}.  Thus $r_\ell = r_{\ell-1}\ne 0$; see Remark~\ref{rem:stop2}.
\end{proof}

\subsection{Norm of $Ar_k$}

Next we derive recurrence relations for $Ar_k$ and its norm
$\psi_k \equiv \norm{Ar_k}$, and we show that $Ar_k$ is orthogonal
to $\mathcal{K}_k(A,b)$.

\begin{lemma}[$Ar_k$ and $\psi_k \equiv \norm{Ar_k}$ for \MINRESQLP{}]
  \label{minresqlp_Ar}

%If $\beta_{k+1}>0$, then
%\[
%  \begin{array}{@{}l@{\quad}l@{\quad}l@{}}
%     \rank(L_k)=k,
%     &   Ar_k = \norm{r_k} (\gamma_{k+1}v_{k+1}
%                          + \delta_{k+2}v_{k+2}),
%     & \psi_k = \norm{r_k} \norm{[\gamma_{k+1} \ \; \delta_{k+2}]},
%  \end{array}
%\]
%where $\delta_{k+2}v_{k+2}$ is zero if $\beta_{k+2}=0$.
%\[
%  \begin{array}{@{}l@{\quad}l@{\quad}l@{}}
%     \text{If } \beta_{k+1}=0\ \&\ \rank(L_k)=k,
%     &   Ar_k = 0,
%     & \psi_k = 0.
%  \\ \text{If } \beta_{k+1}=0\ \&\  \rank(L_k)=k\!-\!1,
%     &   Ar_k = Ar_{k-1},
%     & \psi_k = \psi_{k-1}.
%  \end{array}
%\]

\begin{itemize}
\item If $k < \ell$, then $\rank(L_k) = k$,
         $Ar_k = \norm{r_k} (\gamma_{k+1}v_{k+1} + \delta_{k+2}v_{k+2})$
         and $\psi_k = \norm{r_k} \norm{[\gamma_{k+1} \ \; \delta_{k+2}]}$,
         where $\delta_{k+2}=0$ if $k = \ell\!-\!1$.
\item If $\rank(L_\ell) = \ell$, then $Ar_\ell=0$ and $\psi_\ell = 0$.
\item If $\rank(L_\ell) = \ell\!-\!1$, then
         $Ar_\ell = Ar_{\ell-1} = 0$, and
         $\norm{\psi_\ell}  = \psi_{\ell-1} = 0$.
\end{itemize}
\end{lemma}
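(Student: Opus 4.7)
The plan is to exploit two prior facts: (i) Lemma~\ref{minresqlp_r} already establishes that, for $k<\ell$, the MINRES-QLP residual $r_k$ coincides with the MINRES residual, and for $k=\ell$ it either vanishes or equals $r_{\ell-1}$; and (ii) Lemma~\ref{minreslemma2} already gives the desired expression for $Ar_k$ in the MINRES setting. So the work is to stitch these together and handle the boundary case where $L_\ell$ is rank deficient.

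First, for the bullet $k<\ell$, I would simply observe that $r_k$ is identical to the MINRES residual (by Lemma~\ref{minresqlp_r}) and that $\rank(L_k)=k$ follows from $Q_k\underTk P_k = \bmat{L_k \\ 0}$ together with the fact that $\underTk$ has full column rank for $k<\ell$ (property 2 of the Lanczos process) and that $Q_k$ and $P_k$ are orthogonal. The stated formulas for $Ar_k$ and $\psi_k$ (with the convention $\delta_{k+2}=0$ when $k=\ell-1$) are then immediate from Lemma~\ref{minreslemma2}.

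Second, if $\rank(L_\ell)=\ell$ then $R_\ell$ is nonsingular (again since $P_\ell$ is orthogonal and $R_\ell P_\ell = L_\ell$), so $T_\ell$ is nonsingular and by Lemma~\ref{minresqlp_r} we have $r_\ell=0$, hence trivially $Ar_\ell=0$ and $\psi_\ell=0$. Third, if $\rank(L_\ell)=\ell-1$, Lemma~\ref{minresqlp_r} gives $r_\ell=r_{\ell-1}$, so $Ar_\ell = Ar_{\ell-1}$. I then apply the first bullet at $k=\ell-1$ to obtain
\[
  Ar_{\ell-1} \;=\; \norm{r_{\ell-1}}\bigl(\gamma_\ell v_\ell + \delta_{\ell+1} v_{\ell+1}\bigr).
\]
The term $\delta_{\ell+1}v_{\ell+1}$ drops by the $k=\ell-1$ convention, and the coefficient $\gamma_\ell$ vanishes because rank deficiency of $L_\ell$ is equivalent to rank deficiency of $R_\ell$ (via the orthogonal $P_\ell$), which in the MINRES QR factorization forces $R_{\ell\ell}=\gamma_\ell=0$ (as used in the proof of Theorem~\ref{theorem-singular-incompatible}). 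Hence $Ar_\ell=Ar_{\ell-1}=0$ and $\psi_\ell=\psi_{\ell-1}=0$.

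The only step that requires genuine care is the identification $\rank(L_k)=\rank(R_k)$ and, correspondingly, the identification of $\gamma_\ell=0$ with the singular case — everything else is routine bookkeeping over quantities whose recurrences are already in place. No new computation on $Q_k$ or $P_k$ is required; the proof is essentially a reduction to Lemmas~\ref{minreslemma2} and~\ref{minresqlp_r}.
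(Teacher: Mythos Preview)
Your proposal is correct and follows essentially the same approach as the paper: the paper's proof simply says the first case is the same as Lemma~\ref{minreslemma2} and the remaining two cases follow directly from Lemma~\ref{minresqlp_r}. You have merely fleshed out the third case by making explicit the identification $\rank(L_\ell)=\rank(R_\ell)$ and hence $\gamma_\ell=0$ (the same step used in Theorem~\ref{theorem-singular-incompatible}), which the paper leaves implicit.
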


\begin{proof}
%These follow from Lemmas~\ref{minreslemma2} and \ref{minresqlp_r}.
For the first case, the proof is essentially the same as the proof of Lemma~\ref{minreslemma2}.
For the other two cases, the results follow directly from Lemma~\ref{minresqlp_r}.
\end{proof}

\subsection{Matrix norms}

For Lanczos-based algorithms,
$\norm{A} \geq \norm{V_{k+1}^TAV_k}=\norm{\underTk}$.
Define
\begin{equation}  \label{normA2}
  \mathcal{A}^{(0)} \equiv 0, \quad \mathcal{A}^{(k)}
    \equiv \max_{j=1,\ldots,k} \left\{ \norm{\underTj e_j} \right\}
    = \max\left\{\mathcal{A}^{(k-1)},\norm{\underTk e_k} \right\}
      \text{ for } k \ge 1.
\end{equation}
Then $\norm{A} \geq \norm{\underTk} \geq \mathcal{A}^{(k)}$.
Clearly, $\mathcal{A}^{(k)}$ is monotonically increasing and is thus
an improving estimate for $\norm{A}$ as $k$ increases.  By the
property of QLP decomposition in \eqref{qlpeqn1a} and \eqref{qlpRightRef},
we could easily extend \eqref{normA2} to include the largest diagonal
of $L_k$:
\begin{equation}  \label{normA2b}
  \mathcal{A}^{(0)} \equiv 0, \quad
  \mathcal{A}^{(k)} \equiv \max\{\mathcal{A}^{(k-1)},       \,
                                   \norm{ \underline {T_k}e_k}, \,
                                   \gamma_{k-2}^{(6)},          \,
                                   \gamma_{k-1}^{(5)},          \,
                                  |\gamma_k^{(4)}| \} \text { for } k\ge1.
\end{equation}
Some other schemes inspired by Larsen \cite[section~A.6.1]{L98},
Higham \cite{H}, and Chen and Demmel \cite{CD00} follow.
For the latter scheme, we use an implementation by Kaustuv \cite{K08}
for estimating the norms of the rows of $A$.
\begin{enumerate}
\item \cite{L98} $\norm{T_k}_1 \ge \norm{T_k} $
\item \cite{L98} $\sqrt{\norm{\underTk^T \underTk}_1} \ge \norm{T_k}  $
\item \cite{L98} $\norm{T_j} \le \norm{T_k}$  for small $j = 5$ or $20$
\item \cite{H}   \Matlab{} function {\small NORMEST}$(A)$, which is based on the
      power method
\item \cite{CD00}
$\max_i \norm{h_i}/\sqrt{m}$, where
$h_i^T$ is the $i$th row of $AZ$,
each column of $Z \in \mathbb{R}^{n \times m}$ is a random vector of $\pm1$'s,
and $m$ is a small integer (e.g., $m=10$).
\end{enumerate}

Figure~\ref{AnormQLP} plots estimates of $\norm{A}$ for 12 matrices
from the Florida sparse matrix collection \cite{UFSMC} whose sizes $n$
vary from 25 to 3002.  In particular, scheme 3 above with $j = 20$
gives significantly more accurate estimates than other schemes for the
12 matrices we tried.  However, the choice of $j$ is not always clear
and the scheme
adds a little
to the cost of \MINRESQLP{}.  Hence we propose incorporating it into
\MINRESQLP{} (or other Lanzcos-based iterative methods)
if very accurate $\norm{A}$ is needed.  Otherwise \eqref{normA2b} uses
quantities readily available from \MINRESQLP{} and gives us
satisfactory estimates for the order of $\norm{A}$.

\begin{figure}[tb]    %%% Figure 6.1
\centering
\hspace*{-1em}
\includegraphics[width=1.00\textwidth]{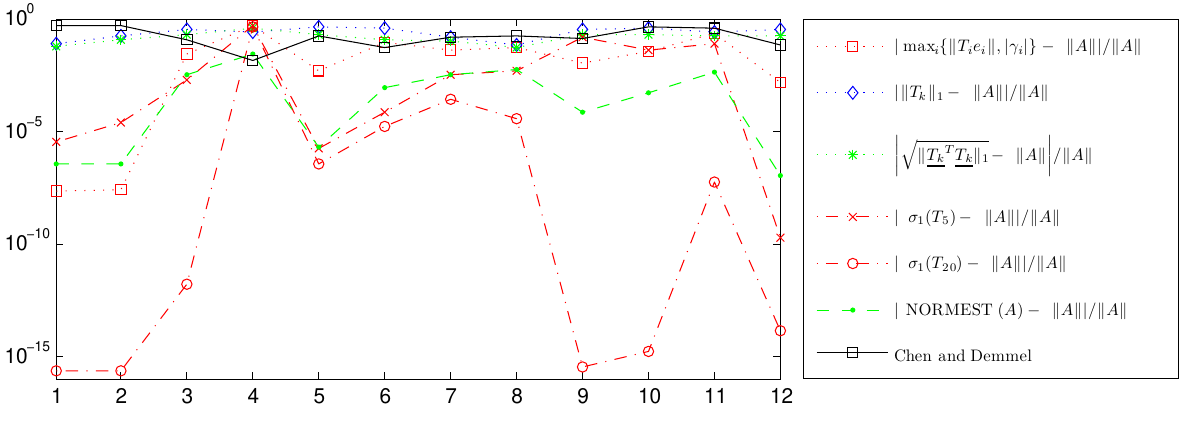}
\caption{Relative errors in different estimates of $\norm{A}$.
  This figure can be reproduced by \texttt{testminresQLPNormA8}.}
\label{AnormQLP}
\end{figure}

\subsection{Matrix condition numbers}

We again apply the property of the QLP decomposition in
\eqref{qlpeqn1a} and \eqref{qlpRightRef} to estimate
$\kappa(\underTk)$, which is a lower bound for
$\kappa(A)$:
\begin{align}
  \gamma_{\min} &\leftarrow \min\{\gamma_1, \gamma_2^{(2)}\}, \quad
  \gamma_{\min}  \leftarrow \min\{\gamma_{\min}, \gamma_{k-2}^{(6)}, \,
                               \gamma_{k-1}^{(5)}, \,  |\gamma_k^{(4)}| \}
                            \text{ for } k \ge 3, \nonumber
\\ \kappa^{(0)} &\equiv 1, \quad
   \kappa^{(k)} \equiv \max\left\{ \kappa^{(k-1)},
                                   \frac{\mathcal{A}^{(k)}}{\gamma_{\min}}
                          \right\}  \text{ for } k \ge 1.
   \label{cond2AQLP}
\end{align}

\subsection{Solution norms} \label{subsectsolnorm}

We derive a recurrence relation for $\norm{ x_k} $ whose cost is as
low as computing the norm of a $3$- or $4$- vector.

Since $\norm{x_k} = \norm{V_k P_k u_k} = \norm{u_k}$, we can estimate
$\norm{x_k}$ by computing $\chi_k \equiv \norm{u_k}$.  However, the
last two elements of $u_k$ change in $u_{k+1}$ (and a new element
$\mu_{k+1}$ is added).  We therefore maintain $\chi_{k-2}$
by updating it
and then using it according to
\[
   \chi_{k-2}^{(2)} =
   \norm{[\chi_{k-3}^{(2)} \ \; \mu_{k-2}^{(3)}]},
   \quad
   \chi_{k}
   = \norm{[\chi_{k-2}^{(2)}  \ \; \mu_{k-1}^{(2)} \ \; \mu_k]}
   \quad \text{cf.~\eqref{qlpeqnsol1} and \eqref{qlpeqnsol2}}.
\]
Thus $\chi_{k-2}^{(2)}$ increases monotonically but we cannot guarantee that
$\norm{x_k} $ and its recurred estimate $\chi_k$ are increasing, and
indeed they are not in some examples (see Figure~\ref{davis1177}).

\subsection{Projection norms}
Sometimes the projection of the right-hand side vector $b$ onto
$\mathcal{K}_k(A,b)$ is required
(for example, see \cite{S97}). A simple recurrence relation is
$\omega_k^2 \equiv \norm{Ax_k}^2 = \omega_{k-1}^2 + \tau_k^2$
and we can derive it in the same way as shown in Lemma~\ref{minreslemma2}.
With $\omega_0 \equiv 0$ we have
 $\omega_k \equiv \norm{Ax_k} =
  \norm{[\omega_{k-1} \ \; \tau_k]}$.

\section{Preconditioned \MINRES{} and \MINRESQLP}  \label{sec:pminres}

It is often asked: How can we construct a preconditioner for a linear
system solver so that the same problem is solved with fewer
iterations?  Previous work on preconditioning the symmetric solvers
\CG{}, \SYMMLQ{}, or \MINRES{} includes \cite{RS02, N90, GMPS92, D98,
FRSW98, NV98, RZ99, MGW00, H01, BL03, TPC04}.%SC: taken out GR01 and added RS02

We have the same question for singular symmetric systems $Ax \approx
b$.  Two-sided preconditioning is needed to
preserve symmetry.  We can still solve compatible systems,
but we will no longer obtain the minimum-length solution.  For
incompatible systems, preconditioning alters the ``least squares''
norm.  To avoid this difficulty we must work with larger equivalent
systems that are compatible.  We consider each case in turn, using a
positive-definite preconditioner $M = CC^T$ with \MINRES{} and
\MINRESQLP{} to solve symmetric compatible systems $Ax = b$.
Implicitly, we are solving equivalent symmetric systems $C\inv AC^{-T}
y = C\inv b$, where $C\T x = y$.  As usual, it is possible to work
with $M$ itself, so without loss of generality we can assume $C =
M^{\myhalf}$.

\subsection{Derivation}

We derive preconditioned \MINRES{} for compatible $Ax = b$ by applying
\MINRES{} to the equivalent problem $\bar{A} \bar{x} = \bar{b}$, where
$\bar{A} \equiv M^{-\myhalf} A M^{-\myhalf}$, $\bar{b} \equiv
M^{-\myhalf} b$, and $x = M^{-\myhalf}\bar{x}$.

\subsubsection{Preconditioned Lanczos process}

Let $v_k$ denote the Lanczos vectors of
$\mathcal{K}(\bar{A},\bar{b})$.  With $v_0 = 0$ and $\beta_1 v_1 =
\bar{b}$, for $k=1,2,\ldots$ we define
\begin{equation}  \label{pminresd4}
   z_k = \beta_k M^{ \myhalf} v_k, \qquad
   q_k = \beta_k M^{-\myhalf} v_k,
   \qquad \text{so that} \quad M q_k =z_k.
\end{equation}
Then
\(
  \beta_k = \norm{\beta_k v_k}
      = \norm{M^{-\myhalf} \! z_k}
      = \norm{ z_k} _{M^{-1}}
      = \norm{ q_k }_M = \sqrt{ q_k\T z_k },
\)
where the square root is well defined because $M$ is positive definite,
and the Lanczos iteration is
\begin{align*}
                  p_k &= \bar{A} v_k = M^{-\myhalf} \! A M^{-\myhalf} v_k
                                     = M^{-\myhalf} \! A q_k / {\beta_k},
\\ \alpha_k           &= v_k^T p_k = q_k\T A q_k / {\beta_k^2},
\\ \beta_{k+1}v_{k+1} &= M^{-\myhalf}\! A M^{-\myhalf}v_k
                            -\alpha_k v_k - \beta_k v_{k-1}.
\end{align*}
Multiplying the last equation by $M^{\myhalf}$ we get
\begin{align*}
   z_{k+1} = \beta_{k+1} M^{\myhalf} v_{k+1}
                & = A M^{-\myhalf} v_k - \alpha_k M^{\myhalf} v_k
                                      - \beta_k  M^{\myhalf} v_{k-1}
\\         &= \frac{1}{\beta_k} A q_k - \frac{\alpha_k}{\beta_k} z_k
                                      - \frac{\beta_k}{\beta_{k-1}} z_{k-1}.
\end{align*}
The last expression involving consecutive $z_j$'s replaces the
three-term recurrence in $v_j$'s.  In addition, we need to
solve a linear system $M q_k = z_k$ \eqref{pminresd4} each
iteration.

\subsubsection{Preconditioned \MINRES}

From \eqref{minresxk} and \eqref{eq:rdeqv} we have the following
recurrence for the $k$th column of $D_k = V_k R_k\inv$ and $\bar{x}_k$:
\[
  d_k = \bigl( v_k - \delta_k^{(2)} d_{k-1}-\epsilon_k d_{k-2} \bigr)
           / \gamma_k^{(2)},
  \qquad
  \bar{x}_k = \bar{x}_{k-1} + \tau_k d_k.
\]
Multiplying the above two equations by $M^{-\myhalf}$ on the left and
defining $\bar{d}_k = M^{-\myhalf}d_k$, we can update the solution of
our original problem by
\[
  \bar{d}_k = \Bigl( \frac{1}{\beta_k} q_k
                - \delta_k^{(2)} \bar{d}_{k-1} - \epsilon_k \bar{d}_{k-2} \Bigr)
                 \!\bigm/\! \gamma_k^{(2)},
  \qquad
  x_k = M^{-\myhalf} \bar{x}_k
      = x_{k-1} + \tau_k \bar{d}_k.
\]
We list the algorithm in \cite[Table~3.4]{C06}.

\subsubsection{Preconditioned \MINRESQLP} \label{secPMINRESQLP}

A preconditioned \MINRESQLP{} can be derived very similarly.  The
additional work is to apply right reflectors $P_k$ to $R_k$, and the
new subproblem bases are $W_k \equiv V_k P_k$, with $\bar{x}_k = W_k
u_k$. Multiplying the new basis and solution estimate by
$M^{-\myhalf}$ on the left, we obtain
\begin{align*}
   \overline{W}_k &\equiv M^{-\myhalf} W_k = M^{-\myhalf} V_k P_k,
\\            x_k & = M^{-\myhalf} \bar{x}_k
                    = M^{-\myhalf} W_k {u}_k
                    = \overline{W}_k {u}_k
                    = x_{k-2}^{(2)} +
                      \mu_{k-1}^{(2)} \bar{w}_{k-1}^{(3)} +
                      \mu_k \bar{w}_k^{(2)}.
\end{align*}
Algorithm~\ref{pminresqlpalgo} lists all steps.  Note that $\bar{w}_k$
is written as $w_k$ for all relevant $k$.  Also, the output $x$ solves
$Ax \approx b$ but the other outputs are associated with $\bar{A}\bar{x}
\approx \bar{b}$.

\paragraph{Remark}
The requirement of positive-definite preconditioners $M$ in \MINRES{}
and \MINRESQLP{} may seem unnatural for a problem with indefinite $A$
because we cannot achieve $M^{-\myhalf}\! A M^{-\myhalf} \approx I$.
However, as shown in \cite{GMPS92}, we can achieve $M^{-\myhalf}\! A
M^{-\myhalf} \approx \left[\begin{smallmatrix}I \\ & -I
  \end{smallmatrix}\right]$ using an approximate
block-LDL$^{\text{T}}$ factorization $A \approx LDL\T$ to get $M =
L|D|L\T$, where $D$ is indefinite with blocks of order 1 and 2, and
$|D|$ has the same eigensystem as $D$ except negative eigenvalues are
changed in sign.

\SQMR{} \cite{FN94} without preconditioning is analytically equivalent
to \MINRES{}.  Unlike \MINRES, \SQMR{} can work directly with an
indefinite preconditioner (such as block-LDL$^{\text{T}}$). However,
in finite precision, \SQMR{} needs ``look-ahead'' to prevent numerical
breakdown.

\begin{algo}{p}

  \Inputs{$A, b, \sigma, M$}

  \smallskip

  $z_0 = 0$,
  \qquad $z_1 = b$,
  \qquad Solve $Mq_1 = z_1$,
  \qquad $\beta_1 = \sqrt{b\T q_1}$
  \tcc*[f]{Initialize}\;

  $w_0 = w_{-1} = 0$,
  \qquad $x_{-2} = x_{-1} = x_0 = 0$\;
  $c_{0,1} \!=\! c_{0,2} \!=\! c_{0,3} \!=\! -1$,
  \quad $s_{0,1} \!=\! s_{0,2} \!=\! s_{0,3}\!=\! 0$,
  \quad $\phi_0 \!=\! \beta_1$,
  \quad $\tau_0 \!=\! \omega_0 \!=\! \chi_{-2} \!=\! \chi_{-1} \!=\! \chi_0 \!=\! 0$\;

  $\delta_1 = \gamma_{-1} = \gamma_0
   = \eta_{-1} = \eta_0 = \eta_1
   = \vartheta_{-1} = \vartheta_0 = \vartheta_1
   = \mu_{-1} = \mu_0 = 0$,
  \quad $\mathcal{A} = 0, \quad  \kappa = 1$\;

  $k=0$\;

  \BlankLine

  \While{no stopping condition is satisfied}{
    $k\leftarrow k+1$\;

    $p_k = Aq_k - \sigma q_k$,
    \qquad $\alpha_k = \frac{1}{\beta_k^2}q_k\T p_k$
    \tcc*[f]{Preconditioned Lanczos}\;

    $z_{k+1} = \frac{1}{\beta_k} p_k - \frac{\alpha_k}{\beta_k}z_k
             - \frac{\beta_k}{\beta_{k-1}} z_{k-1}$\;

    Solve $Mq_{k+1} = z_{k+1}$,
    \qquad $\beta_{k+1} = \sqrt{q_{k+1}\T z_{k+1}}$\;

    \lIf{$k = 1$}{$\rho_k = \norm{[\alpha_k \ \; \beta_{k+1}]}$}
    \lElse{$\rho_k = \norm{[\beta_k \ \; \alpha_k \ \; \beta_{k+1}]}$}\;

    $\delta_k^{(2)} = c_{k-1,1} \delta_k+s_{k-1,1} \alpha_k$
    \tcc*[f]{Previous left reflection\dots}\;

    $\gamma_k = s_{k-1,1}\delta_k -c_{k-1,1} \alpha_k$
    \tcc*[f]{on middle two entries of $\underTk e_k$\dots}\;

    ${\epsilon}_{k+1} = s_{k-1,1} \beta_{k+1}$
    \tcc*[f]{produces first two entries in $\underTkp e_{k+1}$}\;

    $\delta_{k+1} = -c_{k-1,1} \beta_{k+1}$\;

    $c_{k1}, s_{k1}, \gamma_k^{(2)}
      \leftarrow \SymOrtho(\gamma_k, \beta_{k+1})$
    \tcc*[f]{Current left reflection}\;

    $c_{k2}, s_{k2}, \gamma_{k-2}^{(6)}
      \leftarrow \SymOrtho(\gamma_{k-2}^{(5)}, \epsilon_k)$
    \tcc*[f]{First right reflection}\;

    $\delta_k^{(3)} = s_{k2}\vartheta_{k-1} - c_{k2} \delta_k^{(2)}$,
    \qquad $\gamma_k^{(3)} = -c_{k2}\gamma_k^{(2)}$,
    \qquad $\eta_k = s_{k2} \gamma_k^{(2)}$\;

    $\vartheta_{k-1}^{(2)} = c_{k2} \vartheta_{k-1} + s_{k2} \delta_k^{(2)}$\;

    $c_{k3}, s_{k3}, \gamma_{k-1}^{(5)}
      \leftarrow \SymOrtho(\gamma_{k-1}^{(4)}, \delta_k^{(3)})$
    \tcc*[f]{Second right reflection\dots}\;

    $\vartheta_k = s_{k3}\gamma_k^{(3)}$,
    \qquad $\gamma_k^{(4)} = -c_{k3}\gamma_k^{(3)}$
    \tcc*[f]{to zero out $\delta_k^{(3)}$}\;

    $\tau_k = c_{k1} \phi_{k-1}$
    \tcc*[f]{Last element of $t_k$}\;

    $\phi_k = s_{k1} \phi_{k-1}$, \quad $\psi_{k-1} = \phi_{k-1}
         \norm{[\smash{\gamma_k \ \; \delta_{k+1}}]}$
    \tcc*[f]{Update $\norm{r_k}$, $\norm{Ar_{k-1}}$}\;

    \lIf{$k=1$}{$\gamma_{\min}=\gamma_{1}$}
    \lElse{$\gamma_{\min} \leftarrow \min{ \{ \gamma_{\min},
           \gamma_{k-2}^{(6)}, \gamma_{k-1}^{(5)}, | \gamma_k^{(4)}|  \}}$}\;

    $\mathcal{A}^{(k)} = \max{ \{\mathcal{A}^{(k-1)}, \rho_k,
           \gamma_{k-2}^{(6)}, \gamma_{k-1}^{(5)}, |\gamma_k^{(4)}| \}}$
    \tcc*[f]{Update $\norm{A}$}\;

    $\omega_k = \norm{[\smash{\omega_{k-1} \ \; \tau_k}]}$,
    \qquad $\kappa \leftarrow \mathcal{A}^{(k)} / \gamma_{\min}$
    \tcc*[f]{Update $\norm{A x_k}$, $\kappa(A)$}\;

    $w_k = -(c_{k2} / \beta_k) q_k + s_{k2} w_{k-2}^{(3)}$
    \tcc*[f]{Update $w_{k-2}$, $w_{k-1}$, $w_k$}\;

    $w_{k-2}^{(4)} = (s_{k2} / \beta_k) q_k + c_{k2} w_{k-2}^{(3)}$

    \lIf{$k>2$}
        {$w_k^{(2)} = s_{k3} w_{k-1}^{(2)} - c_{k3} w_k$,
         \qquad $w_{k-1}^{(3)} = c_{k3} w_{k-1}^{(2)} + s_{k3} w_k$}\;

    \lIf{$k>2$}
        {$\mu_{k-2}^{(3)} = (\tau_{k-2} - \eta_{k-2} \mu_{k-4}^{(4)}
                                        - \vartheta_{k-2} \mu_{k-3}^{(3)})
                            / \gamma_{k-2}^{(6)}$}
    \tcc*[f]{Update $\mu_{k-2}$}\;

    \lIf{$k>1$}
        {$\mu_{k-1}^{(2)} =(\tau_{k-1} - \eta_{k-1} \mu_{k-3}^{(3)} -
         \vartheta_{k-1}^{(2)} \mu_{k-2}^{(3)}) / \gamma_{k-1}^{(5)}$}
    \tcc*[f]{Update $\mu_{k-1}$}\;

    \lIf{$\gamma_k^{(4)} \neq 0$}
        {$\mu_k = (\tau_k - \eta_k \mu_{k-2}^{(3)}
         - \vartheta_k \mu_{k-1}^{(2)}) / \gamma_k^{(4)}$}
    \lElse{$\mu_k = 0$}
    \tcc*[f]{Compute $\mu_k$}\;

    $x_{k-2}^{(2)}  = x_{k-3}^{(2)}  + \mu_{k-2}^{(3)} w_{k-2}^{(3)}$
    \tcc*[f]{Update $x_{k-2}$}\;

    $x_k = x_{k-2}^{(2)}  + \mu_{k-1}^{(2)} w_{k-1}^{(3)}
                       + \mu_k w_k^{(2)}$
    \tcc*[f]{Compute $x_{k}$}\;

    $\chi_{k-2}^{(2)} = \norm{[\smash{\chi_{k-3}^{(2)}  \ \; \mu_{k-2}^{(3)}}]}$
    \tcc*[f]{Update $\norm{x_{k-2}}$}\;

    $\chi_k = \norm{[\smash{\chi_{k-2}^{(2)}  \ \; \mu_{k-1}^{(2)}
                                       \ \; \mu_k}]}$
    \tcc*[f]{Compute $\norm{x_k}$}\;
  }

  \BlankLine

  $x = x_k$,
  \quad $\phi = \phi_k$,
  \quad $\psi = \phi_k \norm{[\smash{\gamma_{k+1}
                                      \ \; \delta_{k+2}}]}$,
  \quad $\chi = \chi_k$,
  \quad $\mathcal{A} = \mathcal{A}^{(k)}$,
  \quad $\omega = \omega_k$\;

  \Outputs{$x, \phi ,\psi, \chi, \mathcal{A}, \kappa, \omega$}\;

  \tcc*[f]{$c,s \leftarrow \SymOrtho(a,b)$ \rm is a stable form for computing
           $r = \sqrt{a^2+b^2}$,  $c = \frac{a}{r}$,  $s = \frac{b}{r}$}\;

  \caption{Preconditioned MINRES-QLP to solve $(A - \sigma I)x \approx b$.}
  \label{pminresqlpalgo}
\end{algo}

\subsection{Preconditioning singular $Ax = b$}

For singular compatible systems, \MINRES{} and \MINRESQLP{} find the
minimum-length solution (see Theorems
\ref{theorem-singular-compatible} and \ref{theorem-MINRES-QLP}).  If
$M$ is nonsingular, the preconditioned system
is also compatible and the solvers return its minimum-length solution.
The unpreconditioned solution solves $Ax \approx b$, but is not necessarily
a minimum-length solution.

\begin{example}
Let
$ A = \left[\begin{smallmatrix}
     1  &   1  &   0  &   0
  \\ 1  &   1  &   1  &   0
  \\ 0  &   1  &   0  &   1
  \\ 0  &   0  &   1  &   0
            \end{smallmatrix}
      \right]$
and
$ b = \left[\begin{smallmatrix}
              6 \\ 9 \\ 6 \\ 3
            \end{smallmatrix}
      \right].
$
Then $\rank(A)=3$ and $Ax=b$ is a singular compatible
system.  The minimum-length solution is $x^{\dagger} =
\left[\begin{smallmatrix} 2 & 4 & 3 & 2
      \end{smallmatrix}\right]\T$.
By binormalization \cite{LG04}
we construct the matrix
$D = \diag([\begin{smallmatrix} 0.84201 & 0.81228 & 0.30957 & 3.2303
            \end{smallmatrix}] )$.
The minimum-length solution of the diagonally preconditioned problem
 $DAD y \!=\! Db$ is $y^{\dagger} \!=\!
\left[\begin{smallmatrix} 3.5739 & 3.6819 & 9.6909 & 0.93156
  \end{smallmatrix}\right]\T$\!\!.  Then $x  = Dy^{\dagger} =
\left[\begin{smallmatrix} 3.0092 & 2.9908 & 3.0000 & 3.0092
      \end{smallmatrix}\right]\T$ is a solution of $Ax=b$,
but $x \neq x^{\dagger}$.
\end{example}

\subsection{Preconditioning singular $Ax \approx b$}

We propose the following techniques for obtaining minimum-residual
solutions of singular incompatible problems.  In each case we use an
equivalent but larger \emph{compatible} system to which \MINRES{}
may be applied.  Even if the larger system is singular, Theorem
\ref{theorem-singular-compatible} shows that the minimum-length
solution of the larger system will be obtained.  The required $x$
will be part of this solution.  Preconditioning still gives a
minimum-residual solution of $Ax \approx b$, and in \emph{some}
cases $x$ will be $x^\dagger$.
If the systems are ill-conditioned, it will be safer and more
efficient to apply \MINRESQLP{} to the original incompatible system.
However, preconditioning will give an $x$ that is ``minimum length''
in a different norm.

\subsubsection{Augmented system}

When $A$ is singular, so is the augmented system
\begin{align}
   \label{eq:augmented}
   \bmat{I & A \\ A} \bmat{r \\ x} &= \bmat{b \\ 0},
\end{align}
but it is always compatible.  Preconditioning with
symmetric positive-definite $M$ gives us a solution
   $\left[ \begin{smallmatrix} r \\ x \end{smallmatrix} \right]$
in which $r$ is unique, but $x$ may not be $x^\dagger$.

\subsubsection{A giant KKT system}

Problem \eqref{eqn4b}
is equivalent to $\min_{r,\,x} x\T x$ subject to \eqref{eq:augmented},
which is an equality-contrained convex quadratic program.  The
corresponding KKT system \cite[section~16.1]{NW}
is both symmetric and compatible:
\begin{equation}  \label{KKT}
\bmat{  &   & I & A
  \\    &-I & A
  \\  I & A
  \\  A                 }
\bmat{r\\x\\y\\z} = \bmat{0\\0\\b\\0}.
\end{equation}
Although this is still a singular system, the upper-left $3 \times 3$
block-submatrix is nonsingular and therefore
% $\left[ \begin{smallmatrix} r \\ x \\ y \end{smallmatrix} \right]$ is
$r$, $x$, and $y$ are
unique and a preconditioner applied to the KKT system would give
$x$ as the minimum-length solution of our original problem.

\subsubsection{Regularization}

If the rank of a given matrix $A$ is ill-determined,
we may want to solve the \emph{regularized} problem \cite{L77, H90}
with parameter $\delta>0$:
\begin{equation}  \label{regLLS}
  \min_x\ \normm{ \bmat{A \\ \;\delta I\;} x - \bmat{b\\0} }^2.
\end{equation}
The matrix
$\left[\begin{smallmatrix} A \\ \delta I \end{smallmatrix}\right]$
has full rank and is always better conditioned than $A$.
\LSQR{} \cite{PS82a,PS82b} may be applied, and its iterates $x_k$
will reduce $\norm{r_k}^2 + \delta^2\norm{x_k}^2$ monotonically.
Alternatively, we could transform \eqref{regLLS} into the following
symmetric compatible systems and apply \MINRES{} or \MINRESQLP.
They tend to reduce $\norm{Ar_k - \delta^2 x_k}$ monotonically.

\smallskip

\begin{description}

\item[Normal equation:]
  \begin{equation}  \label{form3}
     (A^2 + \delta^2 I) x =Ab.
  \end{equation}

\item[Augmented system:]
\[
   \bmat{I &      A
      \\ A & -\delta^2 I }
   \bmat{r\\x} = \bmat{b\\0}.
\]

\item[A two-layered problem:] If we eliminate $v$ from the system
  \begin{equation}  \label{BV}
    \bmat{ I   & A^2
        \\ A^2 & -\delta^2 A^2}
    \bmat{x\\v} = \bmat{0\\Ab}.
  \end{equation}
  we obtain \eqref{form3}.  Thus $x$ is also a solution of our
  regularized problem \eqref{regLLS}.  This is equivalent to the
  two-layered formulation (4.3) in Bobrovnikova and Vavasis
  \cite{BV01} (with $A_1 = A$, $A_2 = D_1 = D_2 = I$, $b_1 = b$, $b_2
  = 0$, $\delta_1 = 1$, $\delta_2 = \delta^2$).  A key property is
  that $x \rightarrow x^\dagger$ as $\delta \rightarrow 0$.

\item[A KKT-like system:] If we define $y = -Av$ and $r = b - Ax -
  \delta^2 y$, then we can show (by eliminating $r$ and $y$ from the
  following system) that $x$ in
  \begin{equation}  \label{BVreferee7}
   % \hbox{\footnotesize $
     \bmat{  &    &    I       & A
        \\   & -I &    A
        \\ I &  A & \delta^2 I
        \\ A }
     \bmat{r \\x \\y \\v} =\bmat{0 \\ 0 \\b \\0}
   % $}
  \end{equation}
  is also a solution of \eqref{BV} and thus of \eqref{regLLS}. The
  upper-left $3\times 3$ block-submatrix of \eqref{BVreferee7} is
  nonsingular, and the correct limiting behavior occurs: $x
  \rightarrow x^\dagger$ as $\delta \rightarrow 0$.  In fact,
  \eqref{BVreferee7} reduces to \eqref{KKT}.

\end{description}

\subsection{General preconditioners}

The construction of preconditioners is usually problem-dependent.  If
not much is known about the structure of $A$, we can only consider
general methods such as diagonal preconditioning and incomplete
Cholesky factorization.  These methods require access to the nonzero
elements of $A$.  (They are not applicable if $A$ exists only as an
operator for returning the product $Ax$.)

For a comprehensive survey of preconditioning techniques, see Benzi
\cite{B02}.  We discuss a few methods for symmetric $A$ that also
require access to the nonzero $A_{ij}$.

\subsubsection{Diagonal preconditioning}

If $A$ has entries that are very different in magnitude, diagonal
scaling might improve its condition.  When $A$ is diagonally dominant
and nonsingular, we can define $D = \diag(d_1,\ldots,d_n)$ with $d_j =
1/|A_{jj}|^{1/2}$.  Instead of solving $Ax=b$, we solve $DADy=Db$,
where $DAD$ is still diagonally dominant and nonsingular with all
entries $\leq1$ in magnitude, and $x=Dy$.

More generally, if $A$ is not diagonally dominant and possibly
singular, we can safeguard division-by-zero errors by choosing a
parameter $\delta > 0$ and defining
\begin{equation}  \label{DAD}
  d_j(\delta) = 1 / \max \{\delta,
                           \, \sqrt{\smash[b]{|A_{jj}|}},
                           \, \max_{i \ne j} |A_{ij}| \},
    \qquad j=1,\ldots,n.
\end{equation}

\begin{example}
\vspace*{0.05in}
\begin{enumerate}

\item If $A =
  \left[\begin{smallmatrix}
                -1 & 10^{-8} &
        \\ 10^{-8} &   1     & 10^4
        \\         & 10^4    &  0
        \\         &         &      & 0
        \end{smallmatrix}\right]$,
then $\kappa(A)\approx10^{4}$.  Let $\delta = 1$,
  $D =
  \left[\begin{smallmatrix}
           1 &         &         &
        \\   & 10^{-2} &         &
        \\   &         & 10^{-2} &
        \\   &         &         & 1
        \end{smallmatrix}\right]$
in \eqref{DAD}.
Then $DAD =
  \left[\begin{smallmatrix}
           -1 & 10^{-10}      &   &
        \\ 10^{-10} & 10^{-4} & 1 &
        \\          & 1       & 0 &
        \\          &         &   & 0
        \end{smallmatrix}\right]$
and $\kappa(DAD) \approx 1$.

\item $A =
  \left[\begin{smallmatrix}
           10^{-4} & 10^{-8} &         &
        \\ 10^{-8} & 10^{-4} & 10^{-8} &
        \\         & 10^{-8} &    0    &
        \\         &         &         &  0
        \end{smallmatrix}\right]$
contains mostly very small entries, and
$\kappa(A)\approx10^{10}$. Let $\delta = 10^{-8}$ and
$D =
  \left[\begin{smallmatrix}
           10^2 &      &      &
        \\      & 10^2 &      &
        \\      &      & 10^8 &
        \\      &      &      & 10^8
\end{smallmatrix}\right]$.
Then $DAD =
  \left[\begin{smallmatrix}
              1    & 10^{-4} &      &
        \\ 10^{-4} &    1    & 10^2 &
        \\         & 10^2    &  0   &
        \\         &         &      & 0
\end{smallmatrix}\right]$
and $\kappa(DAD) \approx 10^2$.
(The choice of $\delta$ makes a critical difference in this case:
with $\delta=1$, we have $D=I$.)
\end{enumerate}
\end{example}

\subsubsection{Binormalization (BIN)} \label{sectbin}

Livne and Golub \cite{LG04} scale a symmetric matrix by a series of
$k$ diagonal matrices on both sides until all rows and columns of the
scaled matrix have unit $2$-norm:
%\begin{equation}  \label{DADLG04}
$ DAD = D_k\cdots D_{1}AD_{1}\cdots D_k$.
%\end{equation}
See also Bradley \cite{B10}.

\begin{example}
If $A =
  \left[\begin{smallmatrix}
           10^{-8} &    1    &
        \\    1   & 10^{-8}  & 10^4
        \\        & 10^4    & 0
  \end{smallmatrix}\right]$,
then $\kappa(A) \approx 10^{12}$. With just one sweep of BIN, we obtain
$D =\diag(8.1\e{-3},6.6\e{-5},1.5)$,
$DAD \approx
  \left[\begin{smallmatrix}
           6.5 \e{-1} & 5.3 \e{-1}  &  0
        \\ 5.3 \e{-1} &     0       &  1
        \\         0  &     1       &  0
\end{smallmatrix}\right]$
and $\kappa(DAD)\approx 2.6$ even though the rows and columns have not
converged to one in the two-norm. In contrast, diagonal scaling
\eqref{DAD} defined by $\delta=1$ and $D = \diag(1, 10^{-4},10^{-4})$
reduces the condition number to approximately $10^4$.
\end{example}

\subsubsection{Incomplete Cholesky factorization}

For a sparse symmetric positive definite matrix $A$, we could compute
a preconditioner by the incomplete Cholesky factorization that
preserves the sparsity pattern of $A$. This is known as IC0 in
the literature.
Often there exists a permutation $P$ such that the IC0 factor of
$PAP\T$ is more sparse than that of $A$.

When $A$ is semidefinite or indefinite, IC0 may not exist, but a
simple variant that may work is the incomplete Cholesky-infinity
factorization \cite[section~5]{Z97}.

\section{Numerical experiments}  \label{sec:numerical}

We compare the computed results of \MINRESQLP{} and various other
Krylov subspace methods to solutions computed directly by the
eigenvalue decomposition (EVD) and the truncated eigenvalue
decompositions (TEVD) of $A$.  For TEVD we have
\begin{align*}
   x_t \equiv \sum_{|\lambda_i| > t \norm{A} \varepsilon}
               \frac{1}{\lambda_i} u_i u_i\T b,
   \qquad
   \norm{A}  = \max |\lambda_i|,
   \qquad
   \kappa_t(A) = \frac{\max |\lambda_i|}
               {\underset{|\lambda_i| > t \norm{A}\varepsilon} \min |\lambda_i|}
\end{align*}
with parameter $t>0$.  Often $t$ is set to $1$, and sometimes to a
moderate number such as $10$ or $100$; it defines a cut-off point
relative to the largest eigenvalue of $A$.  For example,
if most eigenvalues are of order 1
in magnitude
and the rest are of order $\norm{A}\varepsilon\approx10^{-16}$, we
expect TEVD to work better when the small eigenvalues are excluded,
while EVD (with $t=0$) could return an exploding solution.

In the tables of results, \Matlab{} \MINRES{} and \Matlab{} \SYMMLQ{}
are \Matlab's implementation of \MINRES{} and \SYMMLQ{} respectively.
They incorporate \emph{local reorthogonalization} of the Lanczos
vector $v_2$, which could enhance the accuracy of the computations if
$b$ is close to an eigenvector of $A$ \cite{L76}:
\begin{align*}
   \text{Second Lanczos iteration}  &  \text{: } \beta_1 v_1=b,\text{ and }q_2
      \equiv \beta_2 v_2 = Av_1 - \alpha_1 v_1
\\ \text{Local reorthogonalization} &  \text{: } q_2 \leftarrow
      q_2 -(v_1^T q_2)v_1.
\end{align*}
Lacking the correct stopping condition for singular problems,
\Matlab{} \SYMMLQ{} works more than necessary and then selects the
smallest residual norm from all computed iterates; it would sometimes
report that the method did not converge although the selected estimate
appeared to be reasonably accurate.

\MINRES{} \SOL{} and \SYMMLQ{} \SOL{} are implementations based on
\cite{PS75}.  \MINRES$^+$ and \SYMMLQ$^+$ are the same but with
additional stopping conditions for singular incompatible systems (see
Lemma \ref{minreslemma2} and \cite[Proposition 2.12]{C06}).

The computations in this section were performed on a Windows XP
machine with a 3.2GHz Intel Pentium D Processor 940 and 3GB RAM
($\varepsilon \approx 10^{-16}$) .
Tests were performed with each solver on five types of problem:
\begin{enumerate}
\item symmetric, nonsingular linear systems
\item symmetric, singular linear systems
\item mildly incompatible symmetric (and singular) systems (meaning
  $\norm{r}$ is rather small with respect to $\norm{b}$)
\item symmetric (and singular) LS problems
\item Hermitian systems.
\end{enumerate}

We present a few examples that illustrate the key features of
\MINRESQLP{}.  For a larger set of tests and results, such as applying
\MINRESQLP{} and other Krylov methods to Hermitian systems with
preconditioning, we refer to \cite[Chapter 4]{C06}.

For a compatible system, we generate a random vector $b$ that is in
the range of the test matrix ($b\equiv Ay$, $y_{i} \sim i.i.d.\ U(0,1)$,
i.e., $y_1,\ldots,y_n$ are independent and identically distributed random
variables, whose values are drawn from the standard uniform distribution
with support $[0,1]$). For an LS problem, we
generate a random $b$ that is \emph{not} in the range of the test matrix
($b_{i} \sim i.i.d.\ U(0,1)$ often suffices).

If $A$ is Hermitian, then $v^{H}\!Av$ is real for all complex vectors
$v$. Numerically (in double precision), $\alpha_k=$ $v_k^{H}Av_k$ is
likely to have small imaginary parts in the first few Lanczos
iterations and snowball to have large imaginary parts in later
iterations. This would result in a poor estimation of $\norm{T_k}_{F}$
or $\norm{A}_{F}$, and unnecessary errors in the Lanczos vectors.
Thus we made sure to \emph{typecast} $\alpha_k=$
$\operatorname{real}(v_k^{H}Av_k)$ in \MINRESQLP{} and \MINRES{}
\SOL{}.

We could say from the results that the Lanczos-based methods have
built-in regularization features \cite{KS99}, often matching the TEVD
solutions very well.

\subsection{A Laplacian system $Ax \approx b$ (almost compatible)}

Our first example involves a singular indefinite Laplacian matrix $A$
of order $n=400$.  It is block-tridiagonal with each block being a
tridiagonal matrix $T$ of order $N=20$ with all nonzeros equal to 1:
\begin{equation}  \label{eq:Laplace}
  A = \bmat{T & T
         \\ T &    T   & \ddots
         \\   & \ddots & \ddots & T
         \\   &        &   T    & T}_{n\times n},
  \qquad
  T = \bmat{1 & 1
         \\ 1 & 1      & \ddots
         \\   & \ddots & \ddots & 1
         \\   &        &   1    & 1}_{N\times N}.
\end{equation}
\Matlab's \texttt{eig($A$)} reports the following data:
$205$ positive eigenvalues in the interval $[6.1\e{-2},8.87]$,
$39$ almost-zero eigenvalues in $[-2.18\e{-15},3.71 \e{-15}]$,
$156$ negative eigenvalues in $[-2.91,-6.65 \e{-2}]$,
numerical rank $= 361$.

We used a right-hand side with a small incompatible component: $b = Ay
+ 10^{-8}z$ with $y_i$ and $z_i \sim i.i.d.\ U(0,1)$.  Results are summarized
in Table~\ref{tab:Laplacian1}. In the column labeled ``C?", the value
``Y" denotes that the associated algorithm in the row has converged to
the desired NRBE tolerances within $\mathit{maxit}$ iterations
(cf.~Table~\ref{tab-stopping-conditions}); otherwise, we have values
``N" and ``N?", where ``N?" indicates that the algorithm could have
converged if more relaxed stopping conditions were used. The column
``$Av$" shows the total number of matrix-vector products, and column
``$x(1)$" lists the first element of the final solution estimate $x$
for each algorithm. For \GMRES{}, the integer in parentheses is the value of
the restart parameter.
%SC: omited "defines the number of inner iterations within each outer iteration"

\begin{table}    %%% Table 8.1
\caption{Finite element problem $Ax \approx b$ with $b$ almost compatible.
Laplacian on a $20 \times 20$ grid,
$n = 400$,   $\mathit{maxit} =  1200$,
$\operatorname{shift} = 0$,  $\mathit{tol} = 1.0 \e{-15}$,
$\operatorname{maxnorm} = 100$,  $\mathit{maxcond} = 1 \e{15}$,
$\norm{b} = 87$.
To reproduce this example, run \texttt{test\_minresqlp\_eg7\_1(24)}.}
\label{tab:Laplacian1}
\renewcommand{\e}[1]{\text{e}{#1}}
{\footnotesize \renewcommand{\arraystretch}{1.2}
\begin{tabular}{|l@{}|l@{\,}|r@{\,}|r@{\,}|r@{\,}|r@{\,}|r@{\,}|r@{\,}|r@{\,}|r@{\,}|}
\hline
Method         & C?     & $Av $ & $ x(1)~$ & $\norm{x}~$ & $\norm{e}~$ & $\norm{r}~$ & $\norm{Ar}~$ & $\norm{A}~$ & $\kappa(A)$
\\ \hline
EVD            & --            &  -- & $ -7.39\e{5}$ & $ 4.12\e{7}$ & $ 4.1\e{7}  $ & $1.7\e{-7} $ & $7.8\e{-7}  $ & $8.9\e{0}$ & $ 1.1\e{17}$\\
TEVD           & --      & -- & $  3.89\e{-1}$ & $ 1.15\e{1}$ & $ 0.0\e{0}   $ & $1.7\e{-8} $ & $1.4\e{-12} $ & $8.9\e{0} $ & $ 1.5\e{2}$\\
\Matlab{} SYMMLQ & N?    & $371$ & $  3.89\e{-1}$ & $ 1.15\e{1}$ & $ 1.4\e{-7}  $ & $1.8\e{-7} $ & $5.8\e{-7} $ &  --    &  -- \\
SYMMLQ SOL      & N   &  $447$ & $ -3.08\e{0}$ & $ 9.63\e{1}$ & $ 9.5\e{1}   $ & $1.4\e{2} $ & $4.4\e{2} $ & $9.6\e{1}$ & $ 1.3\e{1}$\\
SYMMLQ$^+$     & N   &  $447$ & $  2.94\e{6}$ & $ 4.27\e{8}$ & $ 4.3\e{8}   $ & $1.8\e{2} $ & $6.5\e{2} $ & $8.6\e{0} $ & $ 1.3\e{1}$\\
\Matlab{}  MINRES  & N &  $1200$ & $ -7.50\e{5}$ & $ 2.10\e{7}$ & $ 2.1\e{7}  $ & $1.5\e{7}$ & $9.1\e{7}$ &  --    &  -- \\
MINRES SOL      & N  &  $1200$ & $  9.89\e{5}$ & $ 6.10\e{7}$ & $ 6.1\e{7}   $ & $2.3\e{7} $ & $1.5\e{8} $ & $1.8\e{2}$ & $ 1.5\e{1}$\\
MINRES$^+$      & N  &  $ 611$ & $  1.02\e{0}$ & $ 9.28\e{1}$ & $ 9.2\e{1}   $ & $1.7\e{-8} $ & $2.5\e{-11} $ & $8.6\e{0}$ & $ 6.9\e{13}$\\
MINRES-QLP      & Y   & $ 612$ & $  3.89\e{-1}$ & $ 1.15\e{1}$ & $ 3.7\e{-11}   $ & $1.7\e{-8} $ & $9.3\e{-11} $ & $8.7\e{0}$ & $ 4.3\e{13}$\\
\Matlab{}  LSQR   & Y   &  $1462$ & $  3.89\e{-1}$ & $ 1.15\e{1}$ & $ 2.3\e{-13}   $ & $1.7\e{-8} $ & $3.3\e{-13} $ &  --    &  --\\
LSQR SOL        & Y  &   $1464$ & $  3.89\e{-1}$ & $ 1.15\e{1}$ & $ 2.4\e{-13}   $ & $1.7\e{-8} $ & $3.9\e{-13} $ & $1.5\e{2}$ & $ 6.4\e{3}$\\
\Matlab{} GMRES(30) & N? & $1200$ & $  3.90\e{-1}$ & $ 1.15\e{1}$ & $ 5.2\e{-2}   $ & $3.4\e{-3} $ & $9.4\e{-4} $ &  --     &   --\\
SQMR              & N & $1200$ & $ -2.58\e{8}$ & $ 3.74\e{10}$ & $ 3.7\e{10}$ & $4.6\e{3} $ & $2.3\e{4} $ &  --     &   --\\
\Matlab{}  QMR     & N?  &  $798$ & $  3.89\e{-1}$ & $ 1.15\e{1}$ & $ 5.2\e{-7}   $ & $1.9\e{-8} $ & $2.6\e{-8} $ &  --     &   --\\
\Matlab{} BICG    & N?  &  $790$ & $  3.89\e{-1}$ & $ 1.15\e{1}$ & $ 4.7\e{-7}   $ & $3.9\e{-8} $ & $1.9\e{-7} $ &  --     &   --\\
\Matlab{}  BICGSTAB & N?  & $2035$ & $  3.89\e{-1}$ & $ 1.15\e{1}$ & $ 4.2\e{-7}   $ & $1.7\e{-8} $ & $4.3\e{-13} $ &  --     &   --\\
\hline
\end{tabular}}
\end{table}

\MINRES{} \SOL{} gives a larger solution than \MINRESQLP{}. This example has
a residual norm of about $1.7\times10^{-8}$, so it is not clear
whether to classify it as a linear system or an LS problem.
To the credit of \Matlab{} \SYMMLQ{}, it thinks the system is linear
and returns a good solution.  For \MINRESQLP, the first 410 iterations
are in standard ``\MINRES{} mode'', with a transfer to ``\MINRESQLP{}
mode'' for the last 202 iterations.  \LSQR{} \cite{PS82a,PS82b}
converges to the minimum-length solution but with more than twice the
number of iterations of \MINRESQLP{}.  The other solvers fall short in
some way.

\subsection{A Laplacian LS problem $\min \norm{Ax-b}$}

This example uses the same Laplacian matrix $A$ \eqref{eq:Laplace} but
with a clearly incompatible $b=10\times\operatorname{rand}(n,1)$,
i.e., $b_{i} \sim i.i.d.\ U(0,10)$. The residual norm is about $17$.  Results
are summarized in Table~\ref{tab:Laplacian2}.  \MINRES{} gives an
LS solution, while \MINRESQLP{} is the only solver that
matches the solution of TEVD.  The other solvers do not perform
satisfactorily.

\begin{table}    %%% Table 8.2
\caption{Finite element problem $\min \norm{Ax-b}$.
Laplacian on a $20 \times 20$ grid,
$n = 400$,   $\mathit{maxit} =  500$,
$\operatorname{shift} = 0$,  $\mathit{tol} = 1.0 \e{-14}$,
$\operatorname{maxnorm} = 1 \e{4}$,  $\mathit{maxcond} = 1 \e{14}$,
$\norm{b} = 120$.
To reproduce this example, run \texttt{test\_minresqlp\_eg7\_1(25)}.}
\label{tab:Laplacian2}
\renewcommand{\e}[1]{\text{e}{#1}}
{\footnotesize \renewcommand{\arraystretch}{1.2}
\begin{tabular}{|l@{}|l@{\,}|r@{\,}|r@{\,}|r@{\,}|r@{\,}|r@{\,}|r@{\,}|r@{\,}|r@{\,}|}
\hline
Method         & C?  & $Av $  & $x(1)~$        & $\norm{x}~$  & $\norm{e}~$ & $\norm{r}~$ & $\norm{Ar}~$ & $\norm{A}~$ & $\kappa(A)$
\\ \hline
EVD             & -- & --     & $-7.39\e{14}$ & $ 4.12\e{16}$& $4.1\e{16}$ & $1.8\e{2} $ & $7.9\e{2}  $ & $8.9\e{0}$ & $ 1.1\e{17}$\\
TEVD            & -- & --     & $-8.75\e{0}$  & $ 1.43\e{2}$ & $0.0\e{0}$  & $1.7\e{1} $ & $4.1\e{-12}$ & $8.9\e{0} $ & $ 1.5\e{2}$\\
\Matlab{} SYMMLQ   & N  &   $1$  & $ 2.74 \e{-1}$& $ 1.52\e{1}$ & $1.4\e{2} $ & $6.0\e{1} $ & $2.9\e{2}$  &  --    &  -- \\
SYMMLQ SOL      & N  & $228$  & $-7.70\e{2}$  & $ 9.93\e{3}$ & $9.9\e{3} $ & $7.0\e{3} $ & $3.4\e{4}$  & $6.8\e{1}$ & $ 9.7\e{0}$\\
SYMMLQ$^+$      & N  & $228$  & $-7.70\e{2}$  & $ 9.93\e{3}$ & $9.9\e{3} $ & $7.0\e{3} $ & $3.4\e{4}$  & $7.6\e{0} $ & $ 9.7\e{0}$\\
\Matlab{} MINRES   & N  & $500$  & $ 2.80\e{14}$ & $ 4.07\e{16}$& $4.1\e{16}$ & $2.3\e{2} $ & $1.4\e{3}$  &  --    &  -- \\
MINRES SOL      & N  & $500$  & $-1.46\e{14}$ & $ 2.11\e{16}$& $2.1\e{16}$ & $1.1\e{2} $ & $6.6\e{2}$  & $1.5\e{2}$ & $ 1.4\e{1}$\\
MINRES$^+$      & N  & $ 381$ & $3.88\e{1}$   & $ 6.90\e{3}$ & $6.9\e{3} $ & $1.7\e{1} $ & $1.2\e{-5}$ & $7.9\e{0}$ & $ 1.6\e{10}$\\
MINRES-QLP      & Y  & $ 382$ & $-8.75\e{0}$  & $ 1.43\e{2}$ & $1.7\e{-6}$ & $1.7\e{1} $ & $1.7\e{-5}$ & $8.6\e{0}$ & $ 3.5\e{10}$\\
\Matlab{} LSQR     & Y  & $1000$ & $-8.75\e{0}$  & $ 1.43\e{2}$ & $2.0\e{-5}$ & $1.7\e{1} $ & $1.4\e{-5}$ &  --    &  --\\
LSQR SOL        & Y  & $1000$ & $-8.75\e{0}$  & $ 1.43\e{2}$ & $2.3\e{-5}$ & $1.7\e{1} $ & $1.1\e{-5}$ & $1.2\e{2}$ & $ 4.4\e{3}$\\
\Matlab{} GMRES(30)& N  &  $500$ & $-8.84\e{0}$  & $ 1.25\e{2}$ & $4.8\e{1} $ & $1.7\e{1} $ & $8.2\e{-1}$ &  --     &   --\\
SQMR            & N  &  $500$ & $ 9.58\e{15}$ & $ 1.39\e{18}$& $1.4\e{18}$ & $1.2\e{11}$ & $6.7\e{11}$ &  --     &   --\\
\Matlab{} QMR      & N  &  $556$ & $-7.30\e{0}$  & $ 1.92\e{2}$ & $1.4\e{2} $ & $1.7\e{1} $ & $1.2\e{1} $ &  --     &   --\\
\Matlab{} BICG     & N  &    $2$ & $ 1.40\e{0}$  & $ 1.71\e{1}$ & $1.4\e{2} $ & $6.0\e{1} $ & $2.6\e{2} $ &  --     &   --\\
\Matlab{} BICGSTAB & N  &  $104$ & $-1.12\e{1}$  & $ 1.40\e{2}$ & $9.6\e{1} $ & $2.6\e{1} $ & $1.8\e{1} $ &  --     &   --\\
\hline
\end{tabular}}
\end{table}

\subsection{Regularizing effect of \MINRESQLP}
\label{sec:regularizing}

This example illustrates the regularizing effect of \MINRESQLP{} with
the stopping condition $\chi_k \le \mathit{maxxnorm}$.  For $k \ge 18$
in Figure~\ref{davis1177}, we observe the following values:
\begin{align*}
\displaybreak[0]
\chi_{18} &= \norm{\bmat{2.51 & \phantom{-} 3.87\e{-11} & \phantom{-}1.38\times 10^2}}
  = 1.38\times 10^2,
\\
\displaybreak[0]
\chi_{19} &= \norm{\bmat{2.51 & -8.00\e{-10} &  -1.52 \times 10^2}}
   = 1.52\times 10^2,
\\
\displaybreak[0]
\chi_{20} &= \norm{\bmat{2.51 & \phantom- 1.62 \e{-10} & -1.62 \times 10^6}}
   = 1.62\times 10^6 > \mathit{maxxnorm} \equiv 10^4.
\end{align*}
Because the last value exceeds $\mathit{maxxnorm}$, \MINRESQLP{}
regards the last diagonal element of $L_k$ as a singular value to be
ignored (in the spirit of truncated SVD solutions).  It discards the
last element of $u_{20}$ and updates
\[
  \chi_{20} \leftarrow \norm{\bmat{2.51 & 1.62 \e{-10} & 0}} = 2.51.
\]

The full truncation strategy used in the implementation is justified
by the fact that $x_k = W_k u_k$ with $W_k$ orthogonal.  When
$\norm{x_k}$ becomes large, the last element of $u_k$ is treated as
zero.  If $\norm{x_k}$ is still large, the second-to-last element of
$u_k$ is treated as zero.  If $\norm{x_k}$ is \emph{still} large, the
third-to-last element of $u_k$ is treated as zero.

\begin{figure}    %%% Figure 8.1
\includegraphics[width=\textwidth]{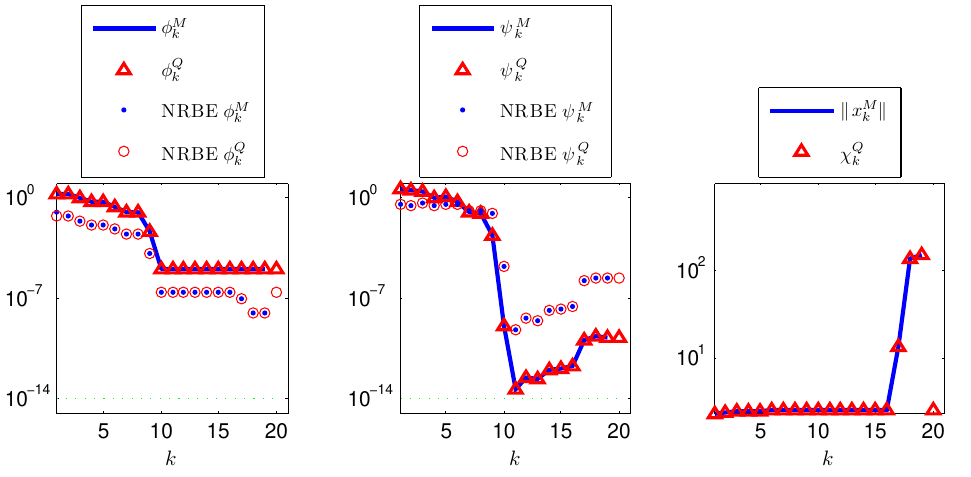}
\caption{Recurred $\phi_k \approx \norm{r_k}$, $\psi_k \approx \norm{Ar_k}$,
  and $\norm{x_k}$ for MINRES and MINRES-QLP.  The matrix $A$ (ID~$1177$
  from \cite{UFSMC}) is positive semidefinite, $n=25$, and $b$ is
  random with $\norm{b} \simeq 1.7$.  Both solvers could have achieved
  essentially the TEVD solution of $Ax\simeq b$ at iteration $11$.
  However, the stringent $\mathit{tol}=10^{-14}$ on the recurred
  normwise relative backward errors (NRBE in
  Table~\ref{tab-stopping-conditions}) prevents them from stopping
  ``in time".  MINRES ends with an exploding solution, yet MINRES-QLP
  brings it back to the TEVD solution at iteration $20$.
  \textbf{Left:} $\phi_k^M$ and $\phi_k^Q$ (recurred $\norm{r_k}$
  of MINRES and MINRES-QLP) and their NRBE.
  \textbf{Middle:} $\psi_k^M$ and $\psi_k^Q$ (recurred
  $\norm{Ar_k}$) and their NRBE.
  \textbf{Right:} $\norm{x_k^M}$ (norms of solution estimates from
  MINRES) and $\chi_k^Q$ (recurred $\norm{x_k}$ from
  MINRES-QLP) with $\mathit{maxxnorm} = 10^{4}$.  This figure can be
  reproduced by \texttt{test\_minresqlp\_fig7\_1(2)}.}
\label{davis1177}
\end{figure}

\subsection{Effects of rounding errors in \MINRESQLP{}}     \label{egDutch}

The recurred residual norms $\phi_k^M$ in \MINRES{} usually
approximate the directly computed ones $\norm{ r_k^M }$ very well
until $\norm{ r_k^M }$ becomes small.  We observe that $\phi_k^M$
continues to decrease in the last few iterations, even though $\norm{
  r_k^M }$ has become stagnant.  This is desirable in the sense that
the stopping rule will cause termination, although the final solution
is not as accurate as predicted.

We present similar plots of \MINRESQLP{} in the following examples,
with the corresponding quantities as $\phi_k^Q$ and $\norm{r_k^Q}$.
We observe that except in very ill-conditioned LS problems,
$\phi_k^Q$ approximates $\norm{r_k^Q}$ very closely.

Figure~\ref{figDPtestSing3_DP} illustrates four singular compatible linear systems.

Figure~\ref{figDPtestLSSing1} illustrates four singular LS problems.

\begin{figure}    %%% Figure 8.2
\centering
\hspace*{-0.1in}
\includegraphics[width=\textwidth]{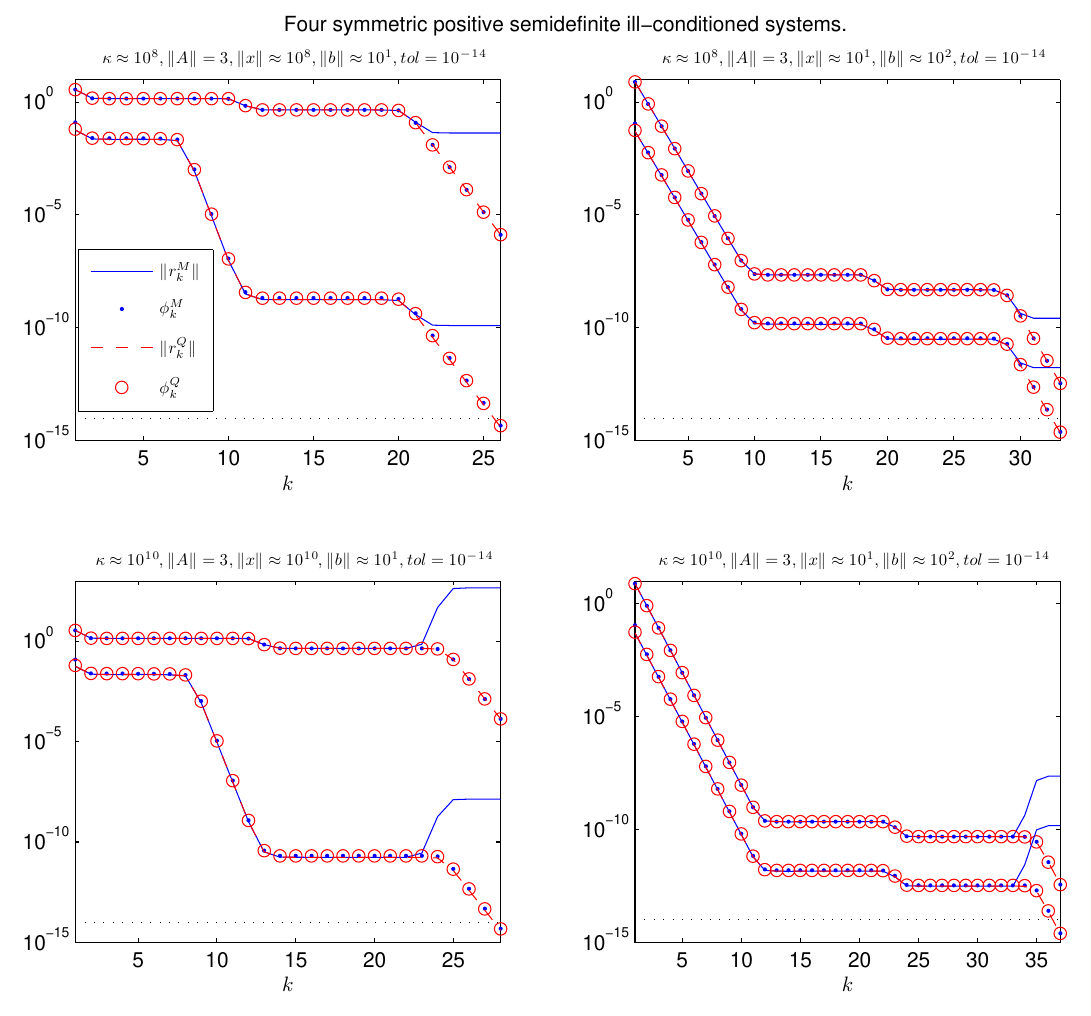}
\vspace*{-0.22in}
\caption[]{Solving $Ax=b$ with semidefinite
  $A$ similar to an example of Sleijpen \etal\ \cite{SVM00}.
  $A = Q \diag([0_5, \eta, 2\eta, 2\!:\! \frac{1}{789}\!:\!3])Q$
  of dimension $n = 797$, nullity 5, and norm $\norm{A}=3$,
  where $Q = I - (2/n) ww\T$ is a Householder matrix generated by
  $v = [0_5, 1, \ldots, 1]\T$, $w = v/\norm{v}$.  These plots
  illustrate and compare the effect of rounding errors in MINRES
  and MINRES-QLP.

  The upper part of each plot shows the computed and recurred residual
  norms, and the lower part shows the computed and recurred normwise
  relative backward errors (NRBE, defined in
  Table~\ref{tab-stopping-conditions}).  MINRES and MINRES-QLP
  terminate when the recurred NRBE is less than the given
  $\mathit{tol} = 10^{-14}$.

\smallskip

\textbf{Upper left:} $\eta=10^{-8}$ and thus $\kappa(A) \approx 10^8$.
Also $b = e$ and therefore $\norm{x} \gg \norm{b}$.  The graphs of
MINRES's directly computed residual norms $\norm{r_k^M}$ and
recurrently computed residual norms $\phi_k^M$ start to differ at the
level of $10^{-1}$ starting at iteration 21, while the values
$\phi_k^Q \approx \norm{r_k^Q}$ from MINRES-QLP decrease monotonically
and stop near $10^{-6}$ at iteration 26.

%\smallskip

\textbf{Upper right:} Again $\eta=10^{-8}$ but $b=Ae$. Thus $\norm{x}=
\norm{e} = O(\norm{b})$.  The MINRES graphs of $\norm{r_k^M}$ and
$\phi_k^M$ start to differ when they reach a much smaller level of
$10^{-10}$ at iteration 30.  The MINRES-QLP $\phi_k^Q$'s are excellent
approximations of $\phi_k^Q$, with both reaching $10^{-13}$ at
iteration 33.

%\smallskip

\textbf{Lower left:} $\eta=10^{-10}$ and thus $A$ is even more
ill-conditioned than the matrix in the upper plots. Here $b= e$ and
$\norm{x}$ is again exploding. MINRES ends with $\norm{r_k^M} \approx
10^2$, which means no convergence, while MINRES-QLP reaches a
residual norm of $10^{-4}$.

%\smallskip

\textbf{Lower right:} $\eta=10^{-10}$ and $b=Ae$. The final MINRES
residual norm $\norm{r_k^M} \approx 10^{-8}$, which is satisfactory
but not as accurate as $\phi_k^M$ claims at $10^{-13}$. MINRES-QLP
again has $\phi_k^Q \approx \norm{r_k^Q} \approx 10^{-13}$ at
iteration 37.

%\smallskip

This figure can be reproduced by \texttt{DPtestSing7.m}.}
\label{figDPtestSing3_DP}

\end{figure}

\begin{figure}[tb]    %%% Figure 8.3
  \centering
%  \vspace*{-0.75in}
  \hspace*{-0.1in}
  \includegraphics[width=\textwidth]{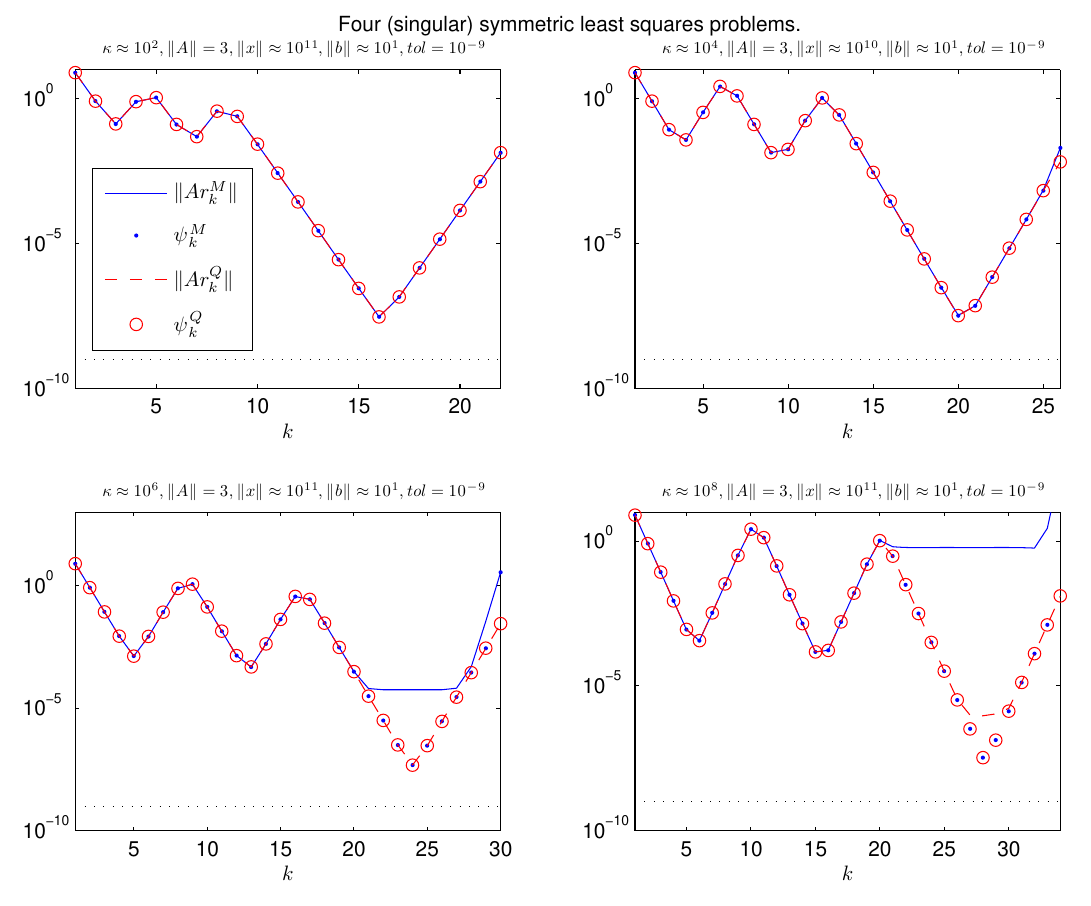}
  \vspace*{-0.25in}
\caption[]{Solving $Ax=b$ with semidefinite $A$
  similar to an example of Sleijpen \etal\ \cite{SVM00}.  $A = Q
  \diag([0_5, \eta, 2\eta, 2 \!:\! \frac{1}{789}\!:\!3])Q$ of
  dimension $n = 797$ with $\norm{A}=3$, where $Q = I - (2/n) ee\T$ is
  a Householder matrix generated by $e = [1, \ldots, 1]\T$.  (We are
  not plotting the NRBE quantities because $\norm{A} \norm{r_k}
  \approx 6$ throughout the iterations in this example.)

%\smallskip

\textbf{Upper left:} $\eta=10^{-2}$ and thus
  $\cond(A) \approx 10^2$.  Also $b=e$ and
  therefore $\norm{x} \gg \norm{b}$.  The graphs of MINRES's
  directly computed $\norm{A r_k^M}$ and recurrently computed
  $\psi_k^M$, and also $\psi_k^Q \approx \norm{Ar_k^Q}$ from
  MINRES-QLP, match very well throughout the iterations.

%\smallskip

\textbf{Upper right:} Here, $\eta=10^{-4}$ and $A$ is more
ill-conditioned than the last example (upper left).  The final MINRES
residual norm $\psi_k^M \approx \norm{A r_k^M}$ is slightly larger
than the final MINRES-QLP residual norm $\psi_k^Q \approx \norm{A
  r_k^Q}$.  The MINRES-QLP $\psi_k^Q$ are excellent approximations of
$\norm{Ar_k^Q}$.

%\smallskip

\textbf{Lower left:}
  $\eta=10^{-6}$ and $\cond(A) \approx 10^6$.
  MINRES's $\psi_k^M$ and $\norm{A r_k^M}$ differ starting at
  iteration 21. Eventually, $\norm{Ar_k^M} \approx 3$, which means
  no convergence.  MINRES-QLP reaches a residual norm of
  $\psi_k^Q = \norm{A r_k^Q} = 10^{-2}$.

%\smallskip

\textbf{Lower right:} $\eta=10^{-8}$.  MINRES performs even worse than
in the last example (lower left).  MINRES-QLP reaches a minimum
$\norm{A r_k^Q} \approx 10^{-7}$ but $\mathit{tol} \!=\! 10^{-8}$
does not shut it down soon enough.
%It ends with a final $\psi_k^Q =
The final $\psi_k^Q =
\norm{A r_k^Q}= 10^{-2}$.  The values of $\psi_k^Q$ and $\norm{A
  r_k^Q}$ differ only at iterations 27--28.

%\smallskip

This figure can be reproduced by \texttt{DPtestLSSing5.m}.
}
\label{figDPtestLSSing1}
\end{figure}

\section{Conclusion}  \label{sec:conclusions}

\MINRES{} constructs its $k$th solution estimate from the recursion
$x_k = D_k t_k = x_{k-1} + \tau_k d_k$ \eqref{minresxk}, where $n$
separate triangular systems $R_k\T D_k\T = V_k^T$ are solved to obtain
the $n$ elements of each direction $d_1, \ldots, d_k$.  (Only $d_k$ is
obtained during iteration $k$, but it has $n$ elements.)

In contrast, \MINRESQLP{} constructs its $k$th solution estimate using
orthogonal steps: $x_k^Q = (V_k P_k)u_k$; see
\eqref{Lsubproblem}--\eqref{qlpeqnsol2}.  Only one triangular system
$L_k u_k = Q_k(\beta_1e_1)$ is involved for each $k$.

Thus \MINRESQLP{} overcomes the potential instability predicted by the
\MINRES{} authors \cite{PS75} and analyzed by Sleijpen \etal\
\cite{SVM00}.  The additional work and storage are moderate, and
maximum efficiency is retained by transferring from \MINRES{} to the
\MINRESQLP{} iterates only when the estimated condition number of $A$
exceeds a specified value.

\MINRES{} and \MINRESQLP{} are readily applicable to Hermitian
matrices, once $\alpha_k$ is typecast as a real scalar in
finite-precision arithmetic.  For both algorithms, we derived
recurrence relations for $\norm{Ar_k}$ and $\norm{Ax_k}$ and used them
to formulate new stopping conditions for singular problems.

TEVD or TSVD are commonly known to use rank-$k$ approximations to $A$
to find approximate solutions to $\min\norm{Ax - b}$ that serve as a
form of \textit{regularization}.  Krylov subspace methods also have
regularization properties \cite{HO93, HN96, KS99}.  Since \MINRESQLP{}
monitors more carefully the rank of $T_k$, which could be $k$ or
$k\!-\!1$, we may say that regularization is a stronger feature in
\MINRESQLP{}, as we have shown in our numerical examples.

It is important to develop robust techniques for estimating
an a priori bound for the solution norm since the \MINRESQLP{} approximations
are not monotonic as is the case in \CG{} and \LSQR{}.  Ideally,
we would also like to determine a practical threshold associated
with the stopping condition $\gamma_k^{(4)} = 0$ in order to
handle cases when $\gamma_k^{(4)}$ is numerically small but not exactly zero.
These are topics for future research.

%\clearpage

\section{Software and reproducible research}  \label{sec:software}

\Matlab~7.6, Fortran~77, and Fortran~90 implementations of \MINRES{} with
new stopping conditions $\norm{Ar_k}\! \leq\! \mathit{tol} \norm{A}
\norm{r_k}$ and $\norm{Ax_k} \leq \mathit{tol} \norm{A} \norm{x_k}$,
and a \Matlab~7.6 implementation of \MINRESQLP{} are available from SOL \cite{SOL}.

Following the philosophy of reproducible computational research as
advocated in \cite{C94, CD02}, for each figure and example in this paper we mention
either the source or the specific \Matlab{} command.  Our \Matlab{}
scripts are available at SOL \cite{SOL}.

\subsection*{Acknowledgements}

We thank Jan Modersitzki, Gerard Sleijpen, Henk Van der Vorst, and also
Kaustuv for providing us with their \Matlab{} scripts, which have
aided us in producing parts of Figures \ref{AnormQLP},
\ref{figDPtestSing3_DP}, and \ref{figDPtestLSSing1}.  We also thank
Michael Friedlander, Rasmus Larsen, and Lek-Heng Lim for their finest
examples of work, discussion, and support.
%SC: Thank reviewers
We are most grateful to our anonymous reviewers for their insightful
suggestions for improving this manuscript.
Last but not least, we dedicate this paper to the memory of our
colleague and friend, Gene Golub.

\clearpage

\appendix

\section{Proof that
   {\rm $T_\ell$ is nonsingular iff $b \in \range(A)$
   (section \ref{sec:Lanproperties})}}
\label{appendixA}

If $T_\ell$ is nonsingular, we have $A V_\ell
  T_\ell\inv e_1 = V_\ell e_1 = \beta_1^{-1} b$.  Conversely,
  if $b \in \range(A)$, then $\range(V_\ell) \subseteq \range(A)$
  and $ \Null(A) \cap \range(V_\ell) = \{0\}$. We also know that
  $\rank(V_\ell) =\ell$ and $\rank(T_\ell) = \rank (V_\ell T_\ell) =
  \rank (A V_\ell) = \rank(V_\ell) - \dim[ \Null(A) \cap \range(V_\ell)]$;
  see \cite[Fact 2.10.4 ii]{B2}.
  Thus $\rank(T_\ell) = \ell$ and so $T_\ell$ is nonsingular.)

\frenchspacing
\bibliographystyle{abbrv}
\bibliography{refs5}

\begin{thebibliography}{10}

\bibitem{AG08}
M.~Arioli and S.~Gratton.
\newblock Least-squares problems, normal equations, and stopping criteria for
  the conjugate gradient method.
\newblock Technical Report RAL-TR-2008-008, Rutherford Appleton Laboratory,
  Oxfordshire, UK, 2008.

\bibitem{BL03}
Z.-Z. Bai and G.-Q. Li.
\newblock Restrictively preconditioned conjugate gradient methods for systems
  of linear equations.
\newblock {\em IMA J. Numer. Anal.}, 23(4):561--580, 2003.

\bibitem{B02}
M.~Benzi.
\newblock Preconditioning techniques for large linear systems: a survey.
\newblock {\em J. Comput. Phys.}, 182(2):418--477, 2002.

\bibitem{B2}
D.~S. Bernstein.
\newblock {\em Matrix mathematics: theory, facts, and formulas}.
\newblock Princeton University Press, Princeton, NJ, 2nd edition, 2009.

\bibitem{BV01}
E.~Y. Bobrovnikova and S.~A. Vavasis.
\newblock Accurate solution of weighted least squares by iterative methods.
\newblock {\em SIAM J. Matrix Anal. Appl.}, 22(4):1153--1174, 2001.

\bibitem{B10}
A.~M. Bradley.
\newblock {\em Algorithms for the Equilibration of Matrices and Their
  Application to Limited-Memory Quasi-Newton Methods}.
\newblock PhD thesis, ICME, Stanford University, 2010.

\bibitem{CPP09}
X.-W. Chang, C.~C. Paige, and D.~Titley-Peloquin.
\newblock Stopping criteria for the iterative solution of linear least squares
  problems.
\newblock Submitted to {\it SIAM J. Matrix Anal. Appl.}, 2008.

\bibitem{CD00}
T.-Y. Chen and J.~W. Demmel.
\newblock Balancing sparse matrices for computing eigenvalues.
\newblock {\em Linear Algebra Appl.}, 309(Issues 1-3):261--287, 2000.

\bibitem{CD02}
S.-C. Choi, D.~L. Donoho, A.~G. Flesia, X.~Huo, O.~Levi, and D.~Shi.
\newblock About {B}eamlab---a toolbox for new multiscale methodologies.
\newblock \url{http://www-stat.stanford.edu/~beamlab/}, 2002.

\bibitem{C06}
S.-C.~T. Choi.
\newblock {\em Iterative Methods for Singular Linear Equations and
  Least-Squares Problems}.
\newblock PhD thesis, ICME, Stanford University, 2006.

\bibitem{C94}
J.~Claerbout.
\newblock Hypertext documents about reproducible research.
\newblock
  \url{http://sepwww.stanford.edu/doku.php?id=sep:research:reproducible}.

\bibitem{D98}
F.~A. Dul.
\newblock M{INRES} and {MINERR} are better than {SYMMLQ} in eigenpair
  computations.
\newblock {\em SIAM J. Sci. Comput.}, 19(6):1767--1782, 1998.

\bibitem{L77}
L.~Eld{\'e}n.
\newblock Algorithms for the regularization of ill-conditioned least squares
  problems.
\newblock {\em Nordisk Tidskr. Informationsbehandling (BIT)}, 17(2):134--145,
  1977.

\bibitem{FRSW98}
B.~Fischer, A.~Ramage, D.~J. Silvester, and A.~J. Wathen.
\newblock Minimum residual methods for augmented systems.
\newblock {\em BIT}, 38(3):527--543, 1998.

\bibitem{FN94}
R.~W. Freund and N.~M. Nachtigal.
\newblock A new {K}rylov-subspace method for symmetric indefinite linear
  systems.
\newblock In W.~F. Ames, editor, {\em Proceedings of the 14th {IMACS} World
  Congress on Computational and Applied Mathematics}, pages 1253--1256. IMACS,
  1994.

\bibitem{GTVV96}
K.~A. Gallivan, S.~Thirumalai, P.~Van~Dooren, and V.~Vermaut.
\newblock High performance algorithms for {T}oeplitz and block {T}oeplitz
  matrices.
\newblock In {\em Proc. Fourth Conference of the International Linear Algebra
  Society (Rotterdam, 1994)}, volume 241/243, pages 343--388, 1996.

\bibitem{GMPS92}
P.~E. Gill, W.~Murray, D.~B. Poncele{\'o}n, and M.~A. Saunders.
\newblock Preconditioners for indefinite systems arising in optimization.
\newblock {\em SIAM J. Matrix Anal. Appl.}, 13(1):292--311, 1992.

\bibitem{GV}
G.~H. Golub and C.~F. Van~Loan.
\newblock {\em Matrix Computations}.
\newblock Johns Hopkins University Press, Baltimore, MD, 3rd edition, 1996.

\bibitem{Had1902}
J.~Hadamard.
\newblock {Sur les probl\`emes aux d\'eriv\'ees partielles et leur
  signification physique}.
\newblock {\em Princeton University Bulletin}, XIII(4):49--52, 1902.

\bibitem{H01}
W.~W. Hager.
\newblock Iterative methods for nearly singular linear systems.
\newblock {\em SIAM J. Sci. Comput.}, 22(2):747--766, 2000.

\bibitem{HN96}
M.~Hanke and J.~G. Nagy.
\newblock Restoration of atmospherically blurred images by symmetric indefinite
  conjugate gradient techniques.
\newblock {\em Inverse Problems}, 12(2):157--173, 1996.

\bibitem{H90}
P.~C. Hansen.
\newblock Truncated singular value decomposition solutions to discrete
  ill-posed problems with ill-determined numerical rank.
\newblock {\em SIAM J. Sci. Statist. Comput.}, 11(3):503--518, 1990.

\bibitem{HO93}
P.~C. Hansen and D.~P. O'Leary.
\newblock The use of the {L}-curve in the regularization of discrete ill-posed
  problems.
\newblock {\em SIAM J. Sci. Comput.}, 14(6):1487--1503, 1993.

\bibitem{HS52}
M.~R. Hestenes and E.~Stiefel.
\newblock Methods of conjugate gradients for solving linear systems.
\newblock {\em J. Research Nat. Bur. Standards}, 49:409--436, 1952.

\bibitem{H}
N.~J. Higham.
\newblock {\em Accuracy and Stability of Numerical Algorithms}.
\newblock SIAM, Philadelphia, PA, 2nd edition, 2002.

\bibitem{JT10}
P.~Jir{\'a}nek and D.~Titley-Peloquin.
\newblock Estimating the backward error in {LSQR}.
\newblock {\em SIAM J. Matrix Anal. Appl.}, 31(4):2055--2074, 2010.

\bibitem{K88}
E.~F. Kaasschieter.
\newblock Preconditioned conjugate gradients for solving singular systems.
\newblock {\em J. Comput. Appl. Math.}, 24(1-2):265--275, 1988.

\bibitem{K08}
Kaustuv.
\newblock {\em IPSOL: An Interior Point Solver for Nonconvex Optimization
  Problems}.
\newblock PhD thesis, SCCM Program, Stanford University, 2008.

\bibitem{KS99}
M.~Kilmer and G.~W. Stewart.
\newblock Iterative regularization and {MINRES}.
\newblock {\em SIAM J. Matrix Anal. Appl.}, 21(2):613--628, 1999.

\bibitem{L50}
C.~Lanczos.
\newblock An iteration method for the solution of the eigenvalue problem of
  linear differential and integral operators.
\newblock {\em J. Research Nat. Bur. Standards}, 45:255--282, 1950.

\bibitem{L98}
R.~M. Larsen.
\newblock {\em Efficient Algorithms for Helioseismic Inversion}.
\newblock PhD thesis, Dept of Computer Science, University of Aarhus, 1998.

\bibitem{L76}
J.~G. Lewis.
\newblock {\em Algorithms for Sparse Matrix Eigenvalue Problems}.
\newblock PhD thesis, Dept of Computer Science, Stanford University, 1976.

\bibitem{LG04}
O.~E. Livne and G.~H. Golub.
\newblock Scaling by binormalization.
\newblock {\em Numer. Algorithms}, 35(1):97--120, 2004.

\bibitem{MGW00}
M.~F. Murphy, G.~H. Golub, and A.~J. Wathen.
\newblock A note on preconditioning for indefinite linear systems.
\newblock {\em SIAM J. Sci. Comput.}, 21(6):1969--1972, 2000.

\bibitem{NV98}
M.~G. Neytcheva and P.~S. Vassilevski.
\newblock Preconditioning of indefinite and almost singular finite element
  elliptic equations.
\newblock {\em SIAM J. Sci. Comput.}, 19(5):1471--1485, 1998.

\bibitem{NW}
J.~Nocedal and S.~J. Wright.
\newblock {\em Numerical Optimization}.
\newblock Springer, New York, 2nd edition, 2006.

\bibitem{N90}
Y.~Notay.
\newblock Solving positive (semi)definite linear systems by preconditioned
  iterative methods.
\newblock In {\em Preconditioned Conjugate Gradient Methods (Nijmegen, 1989)},
  volume 1457 of {\em Lecture Notes in Math.}, pages 105--125. Springer,
  Berlin, 1990.

\bibitem{P76}
C.~C. Paige.
\newblock Error analysis of the {L}anczos algorithm for tridiagonalizing a
  symmetric matrix.
\newblock {\em J. Inst. Math. Appl.}, 18(3):341--349, 1976.

\bibitem{PS75}
C.~C. Paige and M.~A. Saunders.
\newblock Solution of sparse indefinite systems of linear equations.
\newblock {\em SIAM J. Numer. Anal.}, 12(4):617--629, 1975.

\bibitem{PS82a}
C.~C. Paige and M.~A. Saunders.
\newblock {LSQR}: an algorithm for sparse linear equations and sparse least
  squares.
\newblock {\em ACM Trans. Math. Software}, 8(1):43--71, 1982.

\bibitem{PS82b}
C.~C. Paige and M.~A. Saunders.
\newblock \rlap{\phantom{z}}{Algorithm 583}; {LSQR}: Sparse linear equations
  and least-squares problems.
\newblock {\em ACM Trans. Math. Software}, 8(2):195--209, 1982.

\bibitem{RZ99}
W.-Q. Ren and J.-X. Zhao.
\newblock Iterative methods with preconditioners for indefinite systems.
\newblock {\em J. Comput. Math.}, 17(1):89--96, 1999.

\bibitem{RS02}
M.~Rozlo{\v{z}}n{\'{\i}}k and V.~Simoncini.
\newblock Krylov subspace methods for saddle point problems with indefinite
  preconditioning.
\newblock {\em SIAM J. Matrix Anal. Appl.}, 24(2):368--391 (electronic), 2002.

\bibitem{SS86}
Y.~Saad and M.~H. Schultz.
\newblock G{MRES}: a generalized minimal residual algorithm for solving
  nonsymmetric linear systems.
\newblock {\em SIAM J. Sci. Statist. Comput.}, 7(3):856--869, 1986.

\bibitem{S95}
M.~A. Saunders.
\newblock Solution of sparse rectangular systems using {LSQR} and {C}raig.
\newblock {\em BIT}, 35(4):588--604, 1995.

\bibitem{S97}
M.~A. Saunders.
\newblock Computing projections with {LSQR}.
\newblock {\em BIT}, 37(1):96--104, 1997.

\bibitem{SVM00}
G.~L.~G. Sleijpen, H.~A. Van~der Vorst, and J.~Modersitzki.
\newblock Differences in the effects of rounding errors in {K}rylov solvers for
  symmetric indefinite linear systems.
\newblock {\em SIAM J. Matrix Anal. Appl.}, 22(3):726--751, 2000.

\bibitem{SOL}
{S}ystems {O}ptimization {L}aboratory {(SOL)}, {S}tanford {U}niversity,
  downloadable software.
\newblock \url{http://www.stanford.edu/group/SOL/software.html}.

\bibitem{S69}
G.~W. Stewart.
\newblock On the continuity of the generalized inverse.
\newblock {\em SIAM J. Appl. Math.}, 17:33--45, 1969.

\bibitem{Stewart-Rice-1977}
G.~W. Stewart.
\newblock Research, development and {LINPACK}.
\newblock In J.~R. Rice, editor, {\em Mathematical Software III}, pages 1--14.
  Academic Press, New York, 1977.

\bibitem{S99}
G.~W. Stewart.
\newblock The {QLP} approximation to the singular value decomposition.
\newblock {\em SIAM J. Sci. Comput.}, 20(4):1336--1348, 1999.

\bibitem{TPC04}
K.-C. Toh, K.-K. Phoon, and S.-H. Chan.
\newblock Block preconditioners for symmetric indefinite linear systems.
\newblock {\em Internat. J. Numer. Methods Engrg.}, 60(8):1361--1381, 2004.

\bibitem{TB}
L.~N. Trefethen and D.~Bau, III.
\newblock {\em Numerical Linear Algebra}.
\newblock SIAM, Philadelphia, PA, 1997.

\bibitem{UFSMC}
{U}niversity of {F}lorida sparse matrix collection.
\newblock \url{http://www.cise.ufl.edu/research/sparse/matrices/}.

\bibitem{Z97}
Y.~Zhang.
\newblock Solving large-scale linear programs by interior-point methods under
  the {MATLAB} environment.
\newblock {\em Optim. Methods Softw.}, 10(1):1--31, 1998.

\end{thebibliography}

\end{document}